\newcommand{\W}{\mathbb{W}}
\newcommand{\F}{\mathbb{F}}
\renewcommand{\a}{\mathfrak{a}}
\newcommand{\G}{\mathbb{G}}
\newcommand{\M}{\mathcal{M}}
\newcommand{\Z}{\mathbb{Z}}
\newcommand{\CP}{\mathbb{C}\mathrm{P}}
\newcommand{\A}{\hat{\mathbb{A}}}
\newcommand{\N}{\mathbb{N}}
\newcommand{\<}{\langle}
\renewcommand{\>}{\rangle}
\renewcommand{\)}{)\!)}
\renewcommand{\phi}{\varphi}
\renewcommand{\epsilon}{\varepsilon}
\newcommand{\powser}[1]{\llbracket {#1} \rrbracket}
\newcommand{\CatOf}[1]{\mathsf{#1}}
\newcommand{\real}[1]{\underline{\smash{#1}}}
\newcommand{\portrait}{\square}
\DeclareMathOperator{\Spec}{Spec}
\DeclareMathOperator{\Spf}{Spf}
\DeclareMathOperator*{\colim}{colim}
\DeclareMathOperator{\Hom}{Hom}
\DeclareMathOperator{\Pro}{Pro}
\DeclareMathOperator{\Ind}{Ind}
\DeclareMathOperator{\Rings}{Rings}
\DeclareMathOperator{\id}{id}
\DeclareMathOperator{\Spp}{Spp}
\theoremstyle{plain}
\newtheorem{theorem}{Theorem}
\newtheorem*{maintheorem}{Main Theorem}
\newtheorem{proposition}[theorem]{Proposition}
\newtheorem{lemma}[theorem]{Lemma}
\newtheorem{corollary}[theorem]{Corollary}
\newtheorem{conjecture}[theorem]{Conjecture}
\theoremstyle{definition}
\newtheorem{definition}[theorem]{Definition}
\theoremstyle{remark}
\newtheorem{remark}[theorem]{Remark}
\newtheorem{example}[theorem]{Example}
\title{A relative Lubin--Tate theorem via meromorphic formal geometry}
\author{Aaron Mazel-Gee \\ Eric Peterson \\ Nathaniel Stapleton}
\begin{document}

\maketitle

\begin{abstract}
We formulate a theory of punctured affine formal schemes, suitable for certain problems within algebraic topology.  As an application, we show that the Morava $K$-theoretic localizations of Morava $E$-theory corepresent a Lubin--Tate-type moduli problem in this framework.
\end{abstract}

\section{Introduction}

Associated to a fixed formal group law $\Gamma$ of finite height $n$ over the field $\F_{p^n}$, there is a cohomology theory $E_n$ called Morava $E$-theory.  Its coefficient ring can be noncanonically identified as \[E_n^0 \cong \W_{\F_{p^n}}\llbracket u_1, \ldots, u_{n-1}\rrbracket.\]  Both $E$-theory and this ring are intimately connected to the infinitesimal deformation theory of formal groups: the associated formal scheme $LT_n = \Spf E_n^0$ is known as Lubin--Tate space, which classifies deformations of $\Gamma$~\cite{LubinTate}.  More precisely, when $R$ is a complete local ring with residue field $\F_{p^n}$, maps $\Spf R \to LT_n$ are in bijective correspondence with $\star$-isomorphism classes of formal group laws over $R$ which reduce over the special fiber to the specified formal group law $\Gamma$.  Given a complex orientation $\Spf E_n^0 \mathbb{C}\mathrm{P}^\infty \cong \Spf E_n^0\powser{x}$ of $E$-theory, the resulting formal group law associated to the formal group \[\Spf E_n^0\mathbb{C}\mathrm{P}^\infty \to \Spf E_n^0\] is an example of such a versal formal group law.

Transchromatic geometry studies phenomena related to the self-interaction of the height filtration of the moduli of formal groups as appearing in chromatic homotopy theory.  One object of interest in this subfield is $L_{K(t)} E_n$, the $K(t)$-localization of Morava $E$-theory, where Morava $K$-theory $K(t)$ is the reduction of $E_t$ to the special fiber of $LT_t$.  For $t > n$ the spectrum $L_{K(t)} E_n$ is zero, and for $0 \le t \le n$ it follows from work of Mark Hovey~\cite[Lemma 2.3]{Hovey} that the coefficient ring of $L_{K(t)} E_n$ can be expressed as \[(L_{K(t)} E_n)^0 \cong \W_{\F_{p^n}}\llbracket u_1, \ldots, u_{n-1}\rrbracket[u_t^{-1}]^\wedge_{I_t},\] where $I_t$ denotes the ideal $I_t = (p, u_1, \ldots, u_{t-1})$.  Haynes Miller has suggested that this ring should be similarly studied through the moduli problem it represents --- but from the point of view of formal geometry, this ring is extremely unfriendly.  Here is an inexhaustive list of complaints:
\begin{itemize}
\item It is not a complete local ring, so cannot give rise to a formal scheme in the most naive, 1970s sense of the phrase that has served an enormous amount of algebraic topology perfectly well.
\item Inverting topologically nilpotent elements destroys adic topologies.  The adic topology on $k\llbracket x \rrbracket$ comes from declaring linear translates of powers of the ideal $(x)$ to be a basis of open sets.  In turn, this has the effect of requiring that a continuous map $k\llbracket x \rrbracket \to R$ to a discrete ring $R$ send $x$ to a nilpotent element, as continuous maps respect the ideals of definition.  There are therefore no maps $k\(x\) \to R$ restricting to continuous maps $k\llbracket x \rrbracket \to k\(x\) \to R$, which is highly unsatisfactory when identifying schemes with their functors of points.
\item It is generally only complete against \emph{some} of the power series generators.  So, even making using of the available completeness, when $t < n-1$ there remain power series generators for which the ring is \emph{not} complete.  Power series rings without their completion taken into account are quite intimidating.
\end{itemize}
For all these reasons, the classical framework of formal geometry fails to account for the subtleties of this ring.

Nevertheless, rings of this sort have been gaining importance in chromatic homotopy theory.  Here's a small sampling of applications:
\begin{itemize}
\item Takeshi Torii has worked extensively with the cohomology theories $L_{K(n-1)} E_n$ (i.e., the case $t = n-1$), producing many interesting results about transchromatic phenomena and interrelationships among the stabilizer groups of varying heights~\cite{Torii1,Torii2,Torii4,Torii3,Torii5}.
\item Tyler Lawson and Niko Naumann have encountered these rings while studying the interplay of the $K(1)$- and $K(2)$-local obstructions to realizing $BP\<2\>$ as an $E_\infty$-ring spectrum~\cite{LawsonNaumann}.
\item Such rings appear in and around the Tate construction, especially in connection to results about the pro-spectrum $\CP^\infty_{-\infty}$ arising from stunted projective space.  For example, this powers work of Matthew Ando, Jack Morava, and Hal Sadofsky on a phenomenon known as ``blueshift''~\cite{AndoMorava, AMS}.
\item Punctured formal geometry and Laurent series rings appear in the ``sharp construction'' of Matthew Ando, Christopher French, and Nora Ganter, which connects $\CP^\infty_{-\infty}$ to the study of $MU\<2k\>$-orientations~\cite{AFG}.
\item John Greenlees and Neil Strickland have studied a form of transchromatic character theory involving the ring $E_n[u_t^{-1}] / I_t$, which corresponds to the functions on the punctured neighborhood $X_t \setminus X_{t-1}$ for a filtration $X_*$ on Lubin--Tate space~\cite{GreenleesStrickland}.
\item The third author has developed a variety of results concerning transchromatic character theory and group-cohomological data for the theories $L_{K(t)} E_n$~\cite{StapletonCharacters,StapletonTwist}.  The $p$-divisible group $(\Spf E_n^0 \CP^\infty)[p^\infty] \otimes L_{K(t)} E_n^0$ plays a crucial role.
\end{itemize}
It is therefore important to sort out what variation on formal geometry houses these objects, especially given the success of formal geometry at organizing and interpreting results in algebraic topology.

Our goal is to propose a framework in which the ring $L_{K(t)} E_n^0$ and various natural maps concerning it can be studied.  Kazuya Kato has already ventured in this direction~\cite{Kato}, as inspired by work of Shuji Saito~\cite{Saito1, Saito2}.  Both of them worked in a number theoretic context, and what we contribute lies atop their substantial groundwork.  Geometrically, the idea is to study analytic functions on deleted neighborhoods within formal schemes.  This necessitates working with not just a localization of a complete ring, but also remembering the complete ring from which it stems, together with both of their defining direct and inverse systems.  Along with more minor examples, we prove a Lubin--Tate result for $L_{K(t)} E_n^0$ in this extended setting:
\begin{maintheorem}[{Proven below as Theorem~\ref{thm:LubinTatePair}}]
Fix a finite ground field $k$ of positive characteristic $p$ and a formal group law $\Gamma$ of finite height $n$ over $k$.  Select an infinitesimal deformation $X_0$ of $\Spec k$ and a ``punctured affine formal scheme'' $X_1 = \Spp R$ over $X_0$.  Then, for each $t \le n$, there is an equivalence between commuting diagrams of the shape
\begin{center}
\begin{tikzcd}
\Spec k \arrow{r} \arrow[double,-]{d} & X_0 \arrow{d} & X_1 \arrow{l} \arrow{d} \\
\Spec k \arrow{r} & \Spf E_n^0 & \Spp L_{K(t)} E_n^0 \arrow{l}
\end{tikzcd}
\end{center}
and formal group laws $\G_0$ over $X_0$ which deform $\Gamma$ and whose pullback to $X_1$ is of height $t$.
\end{maintheorem}

\subsection{Acknowledgements}

There are a number of people to thank for their contributions to this paper: Olga Stroilova contributed substantially to the initial ideas; Dustin Clausen very helpfully suggested that we look to the algebraic geometry of local fields of large dimension for existing ideas; Jared Weinstein gave us a useful guided tour of many existing extensions of the theory of formal schemes; Charles Rezk is responsible for all three of our interests in the cohomology theory $L_{K(t)} E_n$; Neil Strickland has been persistently inspiring in his use of algebraic geometry to explain phenomena in algebraic topology; Adam Prescott suggested the name ``pipe rings'' over a multitude of worse ideas we'd previously tried; Justin Noel read an early version of this paper and caught a number of crucial errors; and the generosities of Haynes Miller, Peter Teichner, and Constantin Teleman all came together to make possible the visits of the first two authors to MIT, where all the work on this paper happened.  The first author was supported in part by NSF GRFP grant DGE-1106400; the first and second authors were both supported in part by UC Berkeley's geometry and topology RTG grant, which is part of DMS-0838703; and the third author was partially supported by NSF grant DMS-0943787.  Finally, all three authors thoroughly enjoyed the comfort and hospitality of the Muddy Charles.

\section{Continuity after Kato}

The goal of this section is to construct and study a category of ``pipe rings'' satisfying the following desiderata:
\begin{enumerate}
\item The usual category of profinite rings and continuous maps contributes a full subcategory of pipe rings.  That is to say: pipe rings will carry a ``topology'' in general, which reduces to an honest adic topology in classical cases.
\item The localization morphism $\pi_0 E_n \to \pi_0 L_{K(t)} E_n$ belongs to this category, as do the more general iterated localization morphisms \[\pi_0 E_n \to \pi_0 L_{K(t)} E_n \to \pi_0 L_{K(t')} L_{K(t)} E_n \to \cdots.\]
\end{enumerate}
Kato has constructed such a category of rings with these properties in an effort to understand the continuity properties of local fields of large dimension~\cite{Kato}.  Using our terminology, we recall his construction and point out various properties and constructions which we now need but which he has not already recorded.

\begin{definition}
Let $\CatOf{Pipes}_{-1}$ denote the category of finite sets and $\CatOf{Pipes}_0$ the category of profinite sets, which we refer to as \textit{$(-1)$-pipes} and \textit{$0$-pipes} respectively.  For $n \ge 1$, we inductively define the category $\CatOf{Pipes}_n$ of \textit{$n$-pipes} by the formula \[\CatOf{Pipes}_n = \Pro(\Ind \CatOf{Pipes}_{n-1}),\] and we refer to $n$ as the \textit{length}.  (Note that $\CatOf{Pipes}_0$ is \emph{not} equal to $\Pro(\Ind \CatOf{Pipes_{-1}})$.)  There are fully faithful product-respecting inclusions $\CatOf{Pipes}_{n-1} \to \CatOf{Pipes}_n$, given by sending an object of $\CatOf{Pipes}_{n-1}$ to its associated constant system in $\Pro(\Ind \CatOf{Pipes}_{n-1}) = \CatOf{Pipes}_n$.  We denote the sequential colimit along these inclusions by $\CatOf{Pipes}_\infty$, and its objects are referred to simply as \textit{pipes}.\footnote{The word ``pipe'' here is an acronym for Pro-Ind-Pro Ensemble.}
\end{definition}

\begin{definition}
The categories $\CatOf{Pipes}_n$ and $\Ind \CatOf{Pipes}_{n-1}$ admit finite products, so we may speak of their associated categories of ring objects $\CatOf{PipeRings}_n = \Rings(\CatOf{Pipes}_n)$ and $\Rings(\Ind \CatOf{Pipes}_{n-1})$.  As shorthand, we refer to an object of $\CatOf{PipeRings}_n$ as an \textit{$n$-pipe ring}.  The inclusions $\CatOf{Pipes}_{n-1} \to \CatOf{Pipes}_n$ and $\Ind \CatOf{Pipes}_{n-1} \to \Ind \CatOf{Pipes}_n$ are product-preserving, so induce inclusions of categories of ring objects.  In the aggregate we refer to these as \textit{pipe rings}.
\end{definition}

\subsection{An embedding condition}

For convenience, we will write Hom-sets as $[-,-]$, regardless of the ambient category.  The constant system at a singleton set gives a terminal object $1 \in \CatOf{Pipes}_\infty$, and we define a functor $\CatOf{Pipes}_\infty \to \CatOf{Sets}$ by \[S \mapsto [1, S] =: \real{S},\] called \textit{(set-theoretic) realization}.  

This definition can also be given inductively.  On the one hand, if $S \in \CatOf{Pipes}_n$ is given as a pro-system $\{S_\alpha\}_\alpha$, then (considering each $S_\alpha \in \CatOf{Pipes}_n$) we have that
\[ \real{S} = [1,S] = [1,\lim_\alpha S_\alpha] = \lim_\alpha [1,S_\alpha] = \lim_\alpha \real{S_\alpha} . \]
On the other hand, if additionally each $S_\alpha$ is given as an ind-system $\{(S_\alpha)_\beta\}_\beta$ of $(n-1)$-pipes, then we analogously have
\[ \real{S_\alpha} = [1 , S_\alpha] = [1,\colim_\beta (S_\alpha)_\beta] = \colim_\beta [1,(S_\alpha)_\beta] = \colim_\beta \real{(S_\alpha)_\beta} \]
since $1$, as a finite set, is a small object.  Thus, taking the realization is exactly the iterative process of taking limits of pro-systems and colimits of ind-systems.  This implies that realization commutes with finite limits and in particular with finite products, so it induces a functor $\CatOf{PipeRings}_\infty \to \CatOf{Rings}$.  

This functor should be thought of as ``forgetful'', in the sense of sending a topologized ring to its underlying ring.  As one expects, this functor is not injective on objects; just as a given set can support many topologies, so do sets support many pipe structures.  However, as we have set up the category $\CatOf{Pipes}_\infty$, there is no reason to even expect realization to be \emph{faithful}.  This is a more serious problem.

\begin{definition}
To control this aspect of realization, we make two further inductive definitions:
\begin{enumerate}[label=(\alph*)]
\item Every $(-1)$-pipe and $0$-pipe is called \textit{fine}.  An $n$-pipe $Y$ is called \textit{fine} if it can be expressed as a pro-system $\{Y_\alpha\}_\alpha$ of ind-systems $\{(Y_\alpha)_\beta\}_\beta$ such that:
\begin{itemize}
\item Each object of $(Y_\alpha)_\beta$ is a fine $(n-1)$-pipe.
\item The induced map $\real{(Y_\alpha)_\beta} \to \real{Y_\alpha}$ is injective for every choice of $\alpha$ and $\beta$.
\end{itemize}
\item Every $(-1)$-pipe is called \textit{cofine}.  A $0$-pipe $X$ is called \textit{cofine} if it can be expressed as a pro-system $\{X_\lambda\}_\lambda$ of finite sets such that the induced map $\real{X} \to \real{X_\lambda}$ is surjective for each choice of $\lambda$.  Generally, an $n$-pipe $X$ is called \textit{cofine} if it can be expressed as an ind-system $\{X_\lambda\}_\lambda$ of pro-systems $\{(X_\lambda)_\mu\}_\mu$ such that:
\begin{itemize}
\item Each object of $(X_\lambda)_\mu$ is a cofine $(n-1)$-pipe.
\item The induced map $\real X \to \real{X_\lambda}$ is surjective for every choice of $\lambda$.
\end{itemize}
\end{enumerate}
\end{definition}

\begin{lemma}
The properties of being fine and cofine are both preserved by the inclusions $\CatOf{Pipes}_{n-1} \to \CatOf{Pipes}_n$. \qed
\end{lemma}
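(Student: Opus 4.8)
The plan is to unwind the definitions of \emph{fine} and \emph{cofine} and to observe that the image of an $(n-1)$-pipe under $\CatOf{Pipes}_{n-1} \to \CatOf{Pipes}_n$ carries a tautological presentation --- namely the one by constant pro- and ind-systems --- that already witnesses the property in question. There is no real inductive content; the only things to watch are the bookkeeping with the realization functor and the slightly ad hoc definitions of fine and cofine at lengths $-1$ and $0$.

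For the main case, fix $n \ge 1$ and a fine $(n-1)$-pipe $Y$, and let $\iota Y \in \CatOf{Pipes}_n = \Pro(\Ind\CatOf{Pipes}_{n-1})$ denote its image, i.e.\ the constant pro-system whose single value is the constant ind-system whose single value is $Y$. I would verify fineness of $\iota Y$ directly against this presentation: the only $(n-1)$-pipe occurring in it is $Y$, which is fine by hypothesis, and the only map to inspect for the injectivity clause is the structure map $\real{(Y_\alpha)_\beta} \to \real{Y_\alpha}$ of a constant ind-system. By the displayed formula $\real{Y_\alpha} = \colim_\beta \real{(Y_\alpha)_\beta}$, this colimit over a constant diagram collapses to $\real Y$ and the structure map becomes the identity (an isomorphism for any constant index category), hence is injective. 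The cofine assertion is strictly dual: present the image $\iota X$ of a cofine $(n-1)$-pipe $X$ by the constant ind-system whose single value is the constant pro-system whose single value is $X$; the unique $(n-1)$-pipe occurring is the cofine pipe $X$, and by $\real{X_\lambda} = \lim_\mu \real{(X_\lambda)_\mu}$ the map appearing in the surjectivity clause likewise collapses to the identity $\real X \to \real X$, which is surjective.

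It remains to treat the two lowest inclusions, where fine and cofine are defined by hand. For $\CatOf{Pipes}_{-1} \to \CatOf{Pipes}_0$, fineness is preserved vacuously since every $0$-pipe is declared fine, while a finite set $S$ --- regarded as the constant pro-system of finite sets with value $S$ --- has $\real S \to \real S$ the identity and so is cofine as a $0$-pipe. For $\CatOf{Pipes}_0 \to \CatOf{Pipes}_1$, on the fine side ``fine $0$-pipe'' just means ``$0$-pipe'', so a $0$-pipe $Y$ presented by the constant pro-system on the constant ind-system on $Y$ has all its constituents fine and all its structure maps identities, hence is a fine $1$-pipe; on the cofine side, the ad hoc notion of a cofine $0$-pipe is exactly the hypothesis on $Y$, so the constant ind-system on the constant pro-system on $Y$ --- whose only constituent is the cofine $0$-pipe $Y$ and whose realization map is an identity --- exhibits $Y$ as a cofine $1$-pipe. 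I anticipate no genuine obstacle: each inclusion is dispatched directly from the hypothesis on the source object, and the only subtlety is confirming, via the formulas $\real S = \lim_\alpha \real{S_\alpha}$ and $\real{S_\alpha} = \colim_\beta \real{(S_\alpha)_\beta}$, that the realization of a constant pro- or ind-system collapses to the realization of its value.
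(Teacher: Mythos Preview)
Your argument is correct and is exactly the routine verification the paper leaves to the reader: the lemma is marked with a bare \qed\ and no proof is given, so there is nothing further to compare. The only observation needed is the one you make, that the image of an $(n-1)$-pipe under the inclusion is presented by constant systems, whence every constituent is the original (already fine, resp.\ cofine) pipe and every realization map in sight is an identity.
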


Altogether, the point is the following lemma:
\begin{lemma}
The realization functor is faithful when applied to maps $X \to Y$ for which $X$ is cofine and $Y$ is fine.
\end{lemma}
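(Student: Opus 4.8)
The plan is to induct on the length $n$, after first using the preceding lemma to place $X$ and $Y$ in a common category $\CatOf{Pipes}_n$ without disturbing fineness, cofineness, or realizations. For $n \le 0$ there is nothing subtle: under the canonical identifications of $\CatOf{Pipes}_{-1}$ and $\CatOf{Pipes}_0$ with finite sets and with Stone spaces, the realization functor $[1,-]$ is the underlying-set functor, and two continuous maps of Stone spaces with the same underlying function are literally the same map; so realization is faithful on all of $\CatOf{Pipes}_0$, a fortiori on the cofine-to-fine maps in question.

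For the inductive step I would assume the statement in length $n-1$ and take maps $f,g\colon X \to Y$ of $n$-pipes with $X$ cofine, $Y$ fine, and $\real{f} = \real{g}$. Fix a fine presentation of $Y$ as a pro-system $\{Y_\alpha\}$ of ind-systems $\{(Y_\alpha)_\beta\}$ of fine $(n-1)$-pipes, and a cofine presentation of $X$ as an ind-system $\{X_\lambda\}$ of pro-systems $\{(X_\lambda)_\mu\}$ of cofine $(n-1)$-pipes. Since $Y = \lim_\alpha Y_\alpha$ in $\CatOf{Pipes}_n$ and $[X,-]$ carries limits to limits of sets, it suffices to prove that $f$ and $g$ agree after composing with each projection $Y \to Y_\alpha$, and $\real{f} = \real{g}$ persists under this composition; so I may take $Y$ to be a single $Y_\alpha$, an ind-system of fine $(n-1)$-pipes. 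Next I would unwind the Hom-sets of $\CatOf{Pipes}_n$: working through the pieces $X_\lambda$ of the cofine presentation of $X$, then through their terms $(X_\lambda)_\mu$, and through the ind-structure of $Y$, every map $X \to Y$ is captured --- after passing to sufficiently advanced indices --- by honest morphisms $h\colon (X_\lambda)_\mu \to (Y_\alpha)_\beta$ of $(n-1)$-pipes, with two maps captured by the same such data being equal. It therefore suffices, for each $\lambda$, to show that the representatives $h_f, h_g\colon (X_\lambda)_\mu \to (Y_\alpha)_\beta$ of $f$ and $g$ --- which may be taken parallel by further advancing $\mu$ and $\beta$ --- coincide.

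For this the inductive hypothesis applies, since $(X_\lambda)_\mu$ is a cofine $(n-1)$-pipe and $(Y_\alpha)_\beta$ a fine one, so it is enough to check $\real{h_f} = \real{h_g}$. Because realization sends the defining pro- and ind-systems to limits and colimits, precomposing $\real{h_f}$ with the limit-projection $\real{X_\lambda} \to \real{(X_\lambda)_\mu}$ and postcomposing with the colimit-inclusion $\real{(Y_\alpha)_\beta} \to \real{Y_\alpha}$ recovers the pertinent restriction of $\real{f}$, and likewise $\real{h_g}$ recovers that of $\real{g}$; as $\real{f} = \real{g}$, these two composites agree. The fine condition makes the colimit-inclusion $\real{(Y_\alpha)_\beta} \to \real{Y_\alpha}$ injective, so it may be cancelled on the target side; and the cofine condition --- after, if necessary, normalizing the presentation of $X$ so that the pro-systems $\{(X_\lambda)_\mu\}$ have surjective limit-projections, for instance by passing to the system of images --- makes $\real{X_\lambda} \to \real{(X_\lambda)_\mu}$ surjective, so it may be cancelled on the source side. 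This yields $\real{h_f} = \real{h_g}$ and closes the induction.

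The step I expect to cost the most work is precisely this final alignment: verifying that the surjectivity supplied by cofineness and the injectivity supplied by fineness genuinely attach to the structure maps that appear when a map of $n$-pipes is resolved into morphisms of $(n-1)$-pipes --- in effect reconciling the $\Pro(\Ind(-))$ shape built into $\CatOf{Pipes}_n$ with the $\Ind(\Pro(-))$ shape of a cofine presentation --- together with the bookkeeping of choosing the advanced indices once and for all and the preliminary normalization of the given fine and cofine presentations (images in the pro-directions, unions in the ind-directions) needed to make these maps honestly injective and surjective.
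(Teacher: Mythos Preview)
Your inductive strategy is the same as the paper's, and you correctly locate where fineness and cofineness must enter. The difference is organization: rather than chasing two maps $f,g$ down to parallel $(n-1)$-level representatives $h_f,h_g$ and then cancelling an injection and a surjection, the paper exhibits a single chain of inclusions
\[
[X,Y] \;=\; \lim_\alpha \colim_\lambda \lim_\nu \colim_\beta\, [(X_\lambda)_\nu, (Y_\alpha)_\beta] \;\subset\; \cdots \;\subset\; [\real X, \real Y],
\]
applying the inductive hypothesis, then fineness, then cofineness, in that order.

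This organizational difference is exactly what trips you up. Your element-chase forces you to cancel a map $\real{X_\lambda} \to \real{(X_\lambda)_\mu}$ on the source side, and since the cofine condition does not assert surjectivity at that inner level you propose to ``pass to images''. That is a genuine gap: images of maps of $(n-1)$-pipes need not exist as $(n-1)$-pipes, and even if they do there is no reason the image remains cofine, so the inductive hypothesis could fail on it. The paper's own remark following the Pipe Dream conjecture warns that the image/eventual-image construction does not go through beyond length $0$.

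The paper avoids the issue entirely. After the inductive hypothesis and fineness have been applied, the inner index is disposed of by the \emph{equality}
\[
\lim_\nu\,[\real{(X_\lambda)_\nu}, \real{Y_\alpha}] \;=\; [\colim_\nu \real{(X_\lambda)_\nu}, \real{Y_\alpha}] \;=\; [\real{X_\lambda}, \real{Y_\alpha}],
\]
which is nothing more than Hom-out-of-a-colimit together with the recursive formula for realization of an ind-system; no surjectivity is required at this step. Cofineness is then invoked exactly once, at the outer level, to give the inclusion $\colim_\lambda [\real{X_\lambda}, \real{Y_\alpha}] \subset [\real X, \real{Y_\alpha}]$ from the surjections $\real X \twoheadrightarrow \real{X_\lambda}$. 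With this bookkeeping the $\Pro(\Ind)$-versus-$\Ind(\Pro)$ reconciliation you flagged as the hardest step simply does not arise, and no normalization of presentations is needed.
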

\begin{proof}
There exists an integer $n$ for which both $X$ and $Y$ are realized as $n$-pipes and the map $X \to Y$ is exhibited as a map of $n$-pipes.  It thus suffices to check the situation where the source and target have the same length.

Firstly, that realization is faithful on such maps of $(-1)$- and $0$-pipes is trivial.  So, suppose that realization is faithful when applied to such maps of $(n-1)$-pipes.  We compute the action of realization of such a map of $n$-pipes as follows:
\begin{align*}
[X, Y] & = \lim_\alpha \colim_\lambda [X_\lambda, Y_\alpha] = \lim_\alpha \colim_\lambda \lim_\nu \colim_\beta [(X_\lambda)_\nu, (Y_\alpha)_\beta] \\
& \subset \lim_\alpha \colim_\lambda \lim_\nu \colim_\beta [\real{(X_\lambda)_\nu}, \real{(Y_\alpha)_\beta}] & \hbox{(inductive assumption)} \\
& \subset \lim_\alpha \colim_\lambda \lim_\nu [\real{(X_\lambda)_\nu}, \real{Y_\alpha}] & \hbox{(fineness of $Y$)}\\
& = \lim_\alpha \colim_\lambda [ \colim_\nu \real{(X_\lambda)_\nu} , \real{Y_\alpha} ] \\
& = \lim_\alpha \colim_\lambda [\real{X_\lambda}, \real{Y_\alpha}] \\
& \subset \lim_\alpha [\real{X}, \real{Y_\alpha}] & \hbox{(cofineness of $X$)} \\
& = [\real{X}, \real{Y}]. & & \qedhere
\end{align*}
\end{proof}

This lemma demonstrates that $\CatOf{PipeRings}_\infty$ does indeed satisfy our first desideratum.  More to the point, it illustrates the analogy with the classical situation of topologized sets, and indicates that we might consider an $n$-pipe as determining a ``generalized topology'' on its set-theoretic realization.

These conditions are reminiscent of an analogous situation in abstract homotopy theory, where a model structure on a category determines which are the ``right'' objects to map to and from.  Unfortunately, despite substantial effort, we have been unable to precisely pin down this analogy.  Instead, let us record the following conjecture:
\begin{conjecture}[Pipe dream]\label{pipe dream}
For every pipe $X$, there exists an initial cofine pipe $X^c$ over $X$ such that the map $X^c \to X$ induces an isomorphism $\real{X^c} \to \real{X}$.  Dually, for every pipe $Y$, there exists a terminal fine pipe $Y^f$ under $Y$ such that the map $Y \to Y^f$ induces an isomorphism $\real Y \to \real{Y^f}$.  Finally, there is a class of weak equivalences $W$ for which these compute the derived mapping space: \[ \Hom_{\CatOf{Pipes}_\infty[W^{-1}]}(X, Y) \cong \Hom_{\CatOf{Pipes}_\infty}(X^c, Y^f).\]
\end{conjecture}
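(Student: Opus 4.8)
\medskip
\noindent\emph{Proof strategy.} The plan is to handle the three clauses in order: build $X^c$ and $Y^f$ explicitly by induction on length, take for $W$ the class of maps that realization sends to isomorphisms of sets, and then show that with this choice the stated identification of the localized Hom-set holds. The guiding picture is that ``cofine'' should mean ``cofibrant'' and ``fine'' should mean ``fibrant'' for some homotopical structure whose weak equivalences are the $W$-morphisms, with $X^c \to X$ and $Y \to Y^f$ playing the roles of (co)fibrant replacement.

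The two constructive clauses imitate the image factorization of a profinite set. For a $0$-pipe $X = \{X_\lambda\}_\lambda$, set $X^c = \{\im(\real X \to X_\lambda)\}_\lambda$, the pro-system obtained by replacing each finite quotient by the image of the realization. Then $X^c$ is cofine, the evident map $X^c \to X$ has $\real{X^c} \to \real X$ an isomorphism (the composite $\real X \to \real{X^c} \hookrightarrow \real X$ is the identity, so both arrows are bijections), and $X^c \to X$ is initial among cofine pipes over $X$: for cofine $Z \to X$ with $\real Z \xrightarrow{\ \sim\ } \real X$, surjectivity of $\real Z$ onto each $Z_\mu$ forces the composite $\real Z \to X_\lambda$ to have the same image as $\real X \to X_\lambda$, so $Z \to X$ factors uniquely through $X^c$. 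Since every $0$-pipe is already fine, $Y^f = Y$ in length $\le 0$. For the inductive step, given the constructions in length $n-1$, one forms $X^c$ from an $n$-pipe by first replacing each constituent $(n-1)$-pipe by its cofine replacement and then applying the same ``surjectivization'' to the outer pro-layer, and dually one forms $Y^f$ by fine-ifying each constituent and then replacing each inner ind-system term by its image in the realization of that term's colimit, which makes the inner structure maps injective on realizations. Part of this is pure bookkeeping (the needed image factorizations exist in $\CatOf{Pipes}_n$ and in $\Ind\CatOf{Pipes}_{n-1}$; the constructions are functorial; the universal properties hold; realization is unchanged), but one point is genuinely delicate: cofineness of an $n$-pipe for $n \ge 1$ is the nontrivial demand that its pro-of-ind presentation be reorganizable as an ind-of-pro one with surjective structure maps, so one must verify that this reorganization survives the construction.

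With $W$ taken to be the realization-isomorphisms --- which satisfies two-out-of-three for free, realization being a functor --- the last clause asks us to identify $\Hom_{\CatOf{Pipes}_\infty[W^{-1}]}(X,Y)$ with $[X^c, Y^f]$. The route would be to show that $[X,-]$ carries a $W$-morphism between fine pipes to a bijection whenever $X$ is cofine, and dually that $[-,Y]$ carries a $W$-morphism between cofine pipes to a bijection whenever $Y$ is fine. Together with the faithfulness lemma above --- which already gives $[X^c, Y^f] \hookrightarrow [\real X, \real Y]$, so that $[X^c, Y^f]$ is a set of ``continuous'' maps $\real X \to \real Y$ in the generalized sense --- such invariance would exhibit $[X^c, Y^f]$ as a model for the localized Hom-set via the canonical maps $X^c \to X$ and $Y \to Y^f$.

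The hard part, and the step at which we expect the argument to stall (as the surrounding text already warns), is precisely this last invariance. The cofine/fine pair looks exactly like a cofibrant/fibrant pair, but the $\Pro$- and $\Ind$-layers of a pipe pull in opposite directions --- limits are contributed by the $\Pro$-layers and filtered colimits by the $\Ind$-layers --- so there is no evident recipe for assembling a model structure, or even a Brown category of fibrant objects, on $\Pro(\Ind(\mathcal{C}))$ out of homotopical data on $\Ind(\mathcal{C})$ (in contrast with pro-categories, on which model structures can be constructed by hand, after Isaksen and Fausk--Isaksen). Concretely, one would need lifting properties relating the $\Pro$-level surjections that define cofineness to the $\Ind$-level injections that define fineness, uniformly across all lengths, and it is unclear that such lifts exist; without them the invariance of $[X^c, Y^f]$ under $W$-equivalences cannot be proved and the derived Hom-set is not pinned down. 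A second, lower-level obstruction is that even the well-definedness of $X^c$ and $Y^f$ presupposes epi--mono factorizations and enough (co)limits in each $\CatOf{Pipes}_n$, which must be checked carefully because $\Pro$ and $\Ind$ do not commute freely.
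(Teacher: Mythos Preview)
The statement you are attempting to prove is recorded in the paper as a \emph{conjecture}; the authors explicitly write that ``despite substantial effort, we have been unable to precisely pin down this analogy,'' and no proof is offered.  So there is no proof in the paper to compare against, and your honest framing as a ``strategy'' that may stall is appropriate.

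That said, there is a concrete error in your proposal beyond the general incompleteness you already flag.  You take $W$ to be the class of maps that realize to bijections.  The paper rules this out explicitly in the remark on clogged pipes: the map $Q_0 \to Q_1$ of $1$-pipe rings (built from $i_0$ and $i_1$ applied to $R\powser{x}$) realizes to an isomorphism but has nonzero kernel, and the paper observes that ``it cannot suffice to simply define the class $W$ of weak equivalences to be those maps which set-theoretically realize to isomorphisms.''  The point is that no cofine pipe can realize to a singleton while being nontrivial, so the cofineification of that kernel would have to be zero; this is incompatible with the derived Hom formula distinguishing $Q_0$ from $Q_1$ in the way the rest of the paper requires (the two carry genuinely different pipe structures and support different closed ideals).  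Your difficulty is therefore not merely that the invariance of $[X^c,Y^f]$ under $W$ is hard to prove --- it is that with this $W$ the formula is false.

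On the constructive side, your $0$-pipe cofineification $X^c = \{\im(\real X \to X_\lambda)\}_\lambda$ coincides with the paper's ``eventual image'' construction in the remark following the conjecture (for finite $X_\lambda$ the limit image and the eventual image agree).  The paper also isolates the obstruction you gesture at for $n \ge 1$: in higher length one cannot replace the pro-system of images by a constant eventual image, and ``the core feat of any construction of a cofineification functor on $n$-pipes would be to work around this fact.''  Your inductive recipe of ``cofineify the constituents, then surjectivize the outer pro-layer'' does not address this, because surjectivizing the outer layer presupposes exactly the stabilization that fails.
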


\begin{remark}
A cofineification functor for $\CatOf{Pipes}_0$ can be constructed as follows: let $X$ be a profinite set, presented as a pro-system $\{X_\alpha\}_\alpha$ of finite sets.  Fixing any particular $X_\alpha$ in the system, we can consider the objects $X_\beta \to X_\alpha$ over it, each of which has a corresponding image factorization \[X_\beta \to \operatorname{im}(X_\beta \to X_\alpha) \to X_\alpha.\]  If $X_\gamma$ is a yet ``further'' object over $X_\alpha$, in the sense that there is a factorization $X_\gamma \to X_\beta \to X_\alpha$ of the map $X_\gamma \to X_\alpha$, then this induces an inclusion $\operatorname{im}(X_\gamma \to X_\alpha) \to \operatorname{im}(X_\beta \to X_\alpha)$, and altogether gives a pro-system of images over $X_\alpha$.  Since each map in this system is a monomorphism and $X_\alpha$ is a finite set, this pro-system is necessarily equivalent to a constant pro-system on a finite subset $\operatorname{evim}(X_\alpha)$, the ``eventual image''.  Varying $\alpha$, the resulting system of eventual images forms an initial cofine $0$-pipe over $X$.  In the general case of $\CatOf{Pipes}_n$, however, we cannot guarantee that the pro-system of images can be replaced by a constant ``eventual image'' system, and it seems that the core feat of any construction of a cofineification functor on $n$-pipes would be to work around this fact.
\end{remark}

\subsection{Closed ideals and formal geometry}

In algebraic geometry, one studies rings through their associated categories of modules, and in particular through their ideals.  From a categorical perspective, ideals should be thought of as kernels of ring maps; then, just as the restriction of continuity on maps of topologized rings gives rise to the notion of a closed ideal, so should our ``generalized topologies'' determine the correct notion of an ideal in our setting.  In this section, we make this notion precise and study some of its basic features, although we explore little beyond what we will need in the remainder of the paper.  Throughout, we will make quiet use of basic facts about pro-categories, an excellent reference for which is Isaksen's paper~\cite{Isaksen}.

Associated to an $n$-pipe ring $R$, we produce an associated theory of modules by studying $n$-pipe abelian groups $M$ with action maps $R \times M \to M$.  As usual, an ideal $I$ is then such an $R$-module equipped with a monomorphism $I \to R$ of $R$-modules.

\begin{definition}
Let $R$ be an $n$-pipe ring.  An ideal $\real{I} \subset \real{R}$ is said to be \textit{closed} when it is the image under realization of a sequence \[I \xrightarrow{\operatorname{ker}} R \to S.\]
\end{definition}

We can further refine this definition.  Notice that there are $(n+1)$ naturally occurring inclusions $i_m$ of $\CatOf{Pipes}_n$ into $\CatOf{Pipes}_{n+1}$.  For instance, in the case of $n = 0$, given a profinite set $\{X_\alpha\}_\alpha$ one can specify two new pro-ind-profinite sets using the following pair of formulas:
\begin{align*}
\{\{(i_0 X)_\beta\}_\gamma\}_\delta & = X_\delta, &
\{\{(i_1 X)_\beta\}_\gamma\}_\delta & = X_\beta.
\end{align*}
That is, to construct $i_0 X$ we consider the pro-system $\{X_\alpha\}_\alpha$ as a constant system in $\Ind \CatOf{Pipes}_0$, then consider that as a constant system in $\Pro(\Ind \CatOf{Pipes}_0) = \CatOf{Pipes}_1$.  On the other hand, to construct $i_1X$ we consider each finite set $X_\alpha$ as a constant system in $\Pro \CatOf{Pipes}_{-1} = \CatOf{Pipes}_0$, then consider each of those as a constant system in $\Ind \CatOf{Pipes}_0$, and then finally piece these objects together using the original structure maps of $X$ to get a system in $\Pro(\Ind \CatOf{Pipes}_0) = \CatOf{Pipes}_1$.  The standard inclusion $\CatOf{Pipes}_0 \to \CatOf{Pipes}_1$ used in the sequential colimit defining $\CatOf{Pipes}_\infty$ is $i_0$.  The general pattern is similar.

\begin{definition}
A closed ideal $I$ occurring as the kernel of a map $R \to S$ of pipe rings is said to be \textit{stationary of level $m$} when the target $n$-pipe ring $S$ can be written as $i_m$ applied to an $(n-1)$-pipe ring.  More coarsely, we say $I$ is a \textit{$j$-ideal} if it appears as the kernel of a map to an $j$-pipe ring (i.e., it is stationary at all levels between $(j+1)$ and $n$).
\end{definition}

Stationary ideals are an expected phenomenon, given the analogy with classical topology: just as a map of spaces $[0, 1) \to S^1$ can be a continuous bijection but not a homeomorphism, we can produce maps $X \to Y$ of pipes which are not themselves isomorphisms but which induce bijections $\real X \to \real Y$ on realizations.

\begin{definition}
A \textit{clogged pipe} is a pipe $Y$ which admits a map $X \to Y$ which is not an isomorphism but which realizes to an isomorphism $\real X \to \real Y$.
\end{definition}

\begin{example}
Examples of clogged pipes are rife in pipe rings, owing to the inclusions $i_m: \CatOf{Pipes}_{n-1} \to \CatOf{Pipes}_n$.  For instance, let $R\llbracket x \rrbracket$ denote the $0$-pipe ring \[R\llbracket x \rrbracket := \{\cdots \to R[x] / x^3 \to R[x] / x^2 \to R\},\] and consider the $1$-pipe rings
\begin{align*}
Q_0 & = i_0(R\powser{x}) = \operatorname{const}(R\llbracket x \rrbracket), \\
Q_1 & = i_1(R\powser{x}) = \{\cdots \operatorname{const}(R[x] / x^3) \to \operatorname{const}(R[x] / x^2) \to \operatorname{const}(R)\}.
\end{align*}
There is a map $Q_0 \to Q_1$ given by a levelwise quotient $R \llbracket x \rrbracket \to R[x] / x^n$, and while this map realizes to the identity on set-theoretic realizations, no inverse map exists.  Instead, its kernel is given by the nonzero pro-ideal $\{(x^n)\}_{n \ge 1}$, which (given the construction of $Q_1$) we see is stationary of level $1$.
\end{example}

\begin{remark}
Clogs also point out an interesting feature of the co/fine properties defined in the previous subsection.  The only cofine ideal which realizes to the zero ideal under set-theoretic realization is isomorphic to the zero ideal itself.  Hence, no ideal representing a clogged pipe can ever be cofine.  From the perspective of Conjecture~\ref{pipe dream}, this implies that it cannot suffice to simply define the class $W$ of weak equivalences to be those maps which set-theoretically realize to isomorphisms.
\end{remark}

Central to basic algebraic geometry are the notions of being complete and of being local.  We now give analogous definitions in our setting.

\begin{definition}
Recall that $\Pro \CatOf{C}$ is sequentially complete for any base category $\CatOf{C}$.  An $n$-pipe ring $R$ is said to be \textit{$n$-complete with respect to the ideal $I$} (or just \textit{complete}) if $R$ can be expressed as the limit \[R = \lim \left( \cdots \to R / I^2 \to R / I \right)\] in the category of $n$-pipe rings.
\end{definition}

\begin{definition}
Let $R$ be an $n$-pipe ring.  Then for any $x \in \real{R}$ we have a multiplication map
\[ R = 1 \times R \xrightarrow{x \times \id} R \times R \xrightarrow{\mu} R . \]
From this, we can construct $x^{-1}R$ as the ind-object
\[ x^{-1}R = \left\{ R \xrightarrow{x \cdot -} R \xrightarrow{x \cdot -} R \xrightarrow{x \cdot - } \cdots \right\} . \]
More generally, for any multiplicatively closed subset $T \subset \real{R}$ we can analogously form $T^{-1}R$.  If the natural map $R \to T^{-1} R$ is an isomorphism, we say that $R$ is \textit{local} with respect to $T$.
\end{definition}

\begin{proposition}
Suppose $T \subset \real{R}$ contains no zerodivisors.  Then $T^{-1}R$ is fine if $R$ is, and $T^{-1}R$ is cofine if $R$ is.
\end{proposition}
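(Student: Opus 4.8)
The plan is to extract both assertions directly from the description of $T^{-1}R$ as an ind-object. Recall that $T^{-1}R$ is the filtered colimit of the diagram $\{R \xrightarrow{t \cdot -} R \xrightarrow{t' \cdot -} \cdots\}$ indexed by $T$ ordered under divisibility; since $T$ is multiplicatively closed, this indexing category is directed. When $R$ is an $n$-pipe ring this diagram is an object of $\Ind\CatOf{Pipes}_n$, which we regard as an $(n+1)$-pipe ring by passing to the associated constant pro-system. Because realization commutes with finite products and with filtered colimits --- the terminal pipe $1$ being a finite set, hence small --- we obtain $\real{T^{-1}R} = T^{-1}\real{R}$, with the structure map out of the copy of $R$ indexed by $t \in T$ being the localization map $r \mapsto r/t$.

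For the fineness claim I would present $T^{-1}R$ as an $(n+1)$-pipe whose pro-direction is trivial, namely as the one-term pro-system on the ind-system $\{R \xrightarrow{t \cdot -} R \xrightarrow{t' \cdot -} \cdots\}_{t \in T}$. Its ind-entries are all copies of $R$, which is a fine $n$-pipe by hypothesis, so the only remaining obligation is to check that each structure map $\real{R} \to \real{T^{-1}R} = T^{-1}\real{R}$, i.e. $r \mapsto r/t$, is injective. This is exactly the place where the hypothesis on $T$ is used: if $r/t = r'/t$ then $u\,t\,(r - r') = 0$ for some $u \in T$, and since a product of non-zerodivisors is again a non-zerodivisor, $ut$ is a non-zerodivisor and hence $r = r'$. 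This presentation therefore witnesses $T^{-1}R$ as fine.

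For the cofineness claim the argument is formally dual and in fact uses only the multiplicative closure of $T$. Present $T^{-1}R$ as the ind-system $\{R \xrightarrow{t \cdot -} R \cdots\}_{t \in T}$ in which each $n$-pipe $R$ appearing as a term is regarded as the one-term (constant) pro-system on itself. Each of these pro-systems consists of the single cofine $n$-pipe $R$ --- cofine by hypothesis --- and, being constant, trivially meets the surjectivity-of-realization requirement attached to it in the definition of cofineness. Hence this presentation witnesses $T^{-1}R$ as cofine.

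The one genuinely load-bearing move, and the one I would take care to make explicit, is the first: one must interpret $T^{-1}R$ honestly as an ind-object --- equivalently, as a pipe ring of length one greater --- rather than attempt to realize it inside $\CatOf{Pipes}_n$, where the relevant filtered colimit need not commute with the ambient pro-limit. Once that is granted, both statements fall out of the definitions, with the no-zerodivisor hypothesis doing all of its work in securing the single injectivity $\real{R} \hookrightarrow T^{-1}\real{R}$ needed for fineness. The only routine point left to confirm along the way is that this filtered colimit carries a pipe-ring structure, which is the usual verification that localization is compatible with the ring operations.
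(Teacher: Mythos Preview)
Your argument is correct and matches the paper's own proof in substance: fineness comes from the injectivity of the ind-structure maps on realizations (which is exactly where the non-zerodivisor hypothesis enters), and cofineness is vacuous because the pro-direction in the presentation of $T^{-1}R$ is constant. The paper says precisely this in two sentences; you have simply unpacked the details.
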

\begin{proof}
The maps in the defining ind-system for $T^{-1}R$ induce injections on set-theoretic realizations since we are assuming all the elements of $T$ are non-zerodivisors.  On the other hand, inducing up to a pro-system, the cofineness condition is vacuous.
\end{proof}

\begin{remark}
We are deliberately careful to use the notation $x^{-1} R$ in place of $R[x^{-1}]$, to avoid confusing this operation with the quotient construction $R[y]/(xy-1)$.  This latter operation is poorly behaved in pipe rings --- for instance, the finiteness restriction of $\CatOf{Pipes}_{-1}$ can prevent it from existing at all.
\end{remark}

It's worth pointing out that it's obviously difficult to get a handle on the closed ideals of a pipe ring in general.  In the profinite case of $\CatOf{Pipes}_0$, there is a comparison with Stone spaces~\cite[VI.2.3]{Johnstone} that yields an equivalence $\Pro(\CatOf{FiniteRings}) \simeq \CatOf{PipeRings}_0$, but in general we do not expect such an equivalence.  However, our examples of greatest interest do come with this extra structure, which makes this task slightly easier.

First observe that there is a functor $\Pro(\Rings \CatOf{C}) \to \Rings(\Pro \CatOf{C})$.  For example, a pro-ring object $\{R_\alpha\}_\alpha$ inherits a multiplication map $\{R_\alpha\}_\alpha \times \{R_\alpha\}_\alpha = \{R_\alpha \times R_\beta\}_{\alpha,\beta} \to \{R_\alpha\}_\alpha$ by restricting to the cofinal diagonal subsystem $\{R_\alpha \times R_\alpha\}_\alpha \hookrightarrow \{R_\alpha \times R_\beta\}_{\alpha,\beta}$ and using the multiplication maps on the individual rings $R_\alpha$.

\begin{definition}
Fix a ring $R$.  A \textit{rigid $0$-pipe ring} is a $0$-pipe ring which can be expressed as a pro-system of finite $R$-algebras and $R$-algebra morphisms (where by ``finite'' we mean ``finite as a set'').  A \textit{rigid $n$-pipe ring} is an $n$-pipe ring which can be written  as a diagram on the product of projective indexing sets $P_i$ and inductive indexing sets $I_j$:
\[P_0 \times I_1 \times P_1 \times \cdots \times I_n \times P_n \to \CatOf{AbelianGroups}\] in such a way that the following conditions are satisfied:
\begin{itemize}
\item Each object is a finite ring.
\item Each projective morphism is a morphism of rings.
\item Each inductive morphism is a morphism of $R$-modules.
\item For each choice of sequence $(i_{j+1}, p_{j+1}, i_{j+2}, \ldots, i_n, p_n)$ of terminal indices, the restricted system \[P_0 \times I_1 \times \cdots \times P_j \times \{i_{j+1}\} \times \{p_{j+1}\} \times \cdots \times \{p_n\} \to \CatOf{AbelianGroups}\] determines an object in $\CatOf{PipeRings}_j$ via the functor described above.
\end{itemize}
\end{definition}

\begin{proposition}\label{prop:RigidQuotient}
Given a rigid $n$-pipe ring $S$ whose constituent $0$-pipe rings are all quotients of a fixed ring $R$, an $R$-ideal $I$ begets a closed $S$-ideal by tensoring the defining diagram for $S$ with $I$.
\end{proposition}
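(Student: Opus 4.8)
The plan is to realize the claimed ideal as the genuine kernel of an explicit map of pipe rings out of $S$, so that it is closed essentially by the definition of a closed ideal. Write $S$ as a rigid diagram $D\colon P_0\times I_1\times\cdots\times I_n\times P_n\to\CatOf{AbelianGroups}$ whose value at a multi-index $\bullet$ is a finite quotient ring $D(\bullet)=R/J_\bullet$; here I use the standing hypothesis that the constituent $0$-pipe rings of $S$ are quotients of $R$, which we read as saying that every structure map of $D$ is $R$-linear --- the projective ones being $R$-algebra maps between quotients of $R$, and the inductive ones being $R$-module maps by the rigidity axioms. First I would form the diagram $D/I\colon\bullet\mapsto R/(J_\bullet+I)\cong D(\bullet)\otimes_R R/I$ over the same indexing poset, together with the levelwise surjections $D(\bullet)\twoheadrightarrow (D/I)(\bullet)$.

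The first real task is to check that $D/I$ is again a rigid $n$-pipe ring --- call it $S\otimes_R R/I$ --- and that the levelwise surjections assemble to a morphism of pipe rings $q\colon S\to S\otimes_R R/I$. Finiteness is immediate, since each $R/(J_\bullet+I)$ is a quotient of the finite ring $R/J_\bullet$. Well-definedness of the transition maps of $D/I$ reduces to the observation that each transition map of $D$ carries the subgroup $(I+J_\bullet)/J_\bullet$ into the corresponding subgroup downstream: for an inductive transition map, which is multiplication by some $c$ with $cJ_\bullet\subseteq J_{\bullet'}$, this holds because $cI\subseteq I$; for a projective transition map, which is an $R$-algebra map, it holds because such a map sends the image of $I$ to the image of $I$. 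The remaining rigidity axioms for $D/I$ --- in particular that each restricted subsystem over a tuple of terminal indices still determines an object of $\CatOf{PipeRings}_j$ --- follow by restricting the corresponding property of $D$ and passing to the levelwise quotient.

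With $q$ in hand, $\ker(q)$ is by construction a closed $S$-ideal: it is the image under realization of the kernel sequence $\ker(q)\xrightarrow{\ker}S\xrightarrow{q}S\otimes_R R/I$, and since realization commutes with finite limits we have $\real{\ker q}=\ker\!\big(\real S\to\real{S\otimes_R R/I}\big)$, which is therefore genuinely an ideal of $\real S$. Moreover $\ker(q)$ is levelwise the ideal $(I+J_\bullet)/J_\bullet\subseteq D(\bullet)$, so it carries an $S$-module structure and the inclusion $\ker(q)\hookrightarrow S$ is a monomorphism of pipes, being componentwise injective on a common indexing diagram. It remains only to identify $\ker(q)$ with the result of tensoring the defining diagram of $S$ with $I$: computing levelwise, $\ker(q)(\bullet)=\ker\!\big(R/J_\bullet\to R/(J_\bullet+I)\big)=(I+J_\bullet)/J_\bullet$, which is exactly the image of the natural map $I\otimes_R D(\bullet)\to D(\bullet)$; thus $\ker(q)$ is the diagram obtained by tensoring $D$ with $I$ and mapping into $S$, as asserted.

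I expect the one genuinely fiddly point to be the verification that $D/I$ meets the rigidity axioms --- specifically that the restricted subdiagrams still present objects of $\CatOf{PipeRings}_j$ and that the projective transition maps remain well defined after quotienting by the image of $I$; everything else is a direct unwinding of definitions. One mild \emph{proviso}: if one wishes to regard the closed ideal as the diagram $\bullet\mapsto I\otimes_R D(\bullet)$ on the nose rather than as its image in $S$, one should additionally assume $I$ finitely generated, so that these tensor products stay finite; otherwise it is cleaner to take $\ker(q)$, i.e.\ the levelwise image, as the definition.
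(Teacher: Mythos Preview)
Your proof is correct and follows the same approach as the paper: tensor the defining diagram levelwise with $I$ and verify that each level remains a finite set. The paper's own proof is much terser---it addresses only the finiteness check, observing that $A\otimes_R I$ is identified with $IA\subset A$---so your explicit construction of the quotient map $q\colon S\to S\otimes_R R/I$ and your closing proviso distinguishing $I\otimes_R D(\bullet)$ from its image $IA$ are welcome clarifications of points the paper leaves implicit.
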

\begin{proof}
The only unclear point to check is that if $A$ is a finite $R$-algebra in the rigid system expressing $S$, then $A \otimes_R I$ is again finite (i.e., a $(-1)$-pipe).  Because $I$ is an ideal of $S$, this module is equivalent to $IA \subset A$, and since $A$ is finite, so is $IA$.
\end{proof}

Oftentimes --- including in the statement of our main theorem --- it will be useful to track the stages of the construction of a pipe ring in a different way.

\begin{definition}
We define \textit{staged pipe rings} to be the full subcategory $\CatOf{StagedRings} \subset \textup{Fun}(\mathbb{N},\CatOf{PipeRings}_\infty)$ of objects $R_*$ such that $R_n$ is an $n$-pipe ring for all $n$.
\end{definition}

\begin{remark}
Relatedly, there is a fully faithful embedding $\CatOf{Sets} \to \CatOf{Pipes}_1$, since every set can be canonically identified with its ind-system of finite subsets.  In turn, this gives a full embedding $\CatOf{Rings} \to \CatOf{PipeRings}_1$.  It's unclear to the authors whether this is an indication that the category $\CatOf{Pipes}_\infty$ is still far too large to be the ``right'' category or if it is simply an unavoidable feature of working with ind-systems.
\end{remark}

\subsection{Basic examples and the portrait of a pipe ring}

The point of algebraic geometry in general and schemes in particular is to provide very literally an interface between commutative algebra and geometry.  In particular, a scheme comes with a (complicated and difficult) recipe for drawing a picture of it, which is extremely useful for building geometric intuition about the behavior of algebraic objects when it can even partially be carried out.  In this subsection, we produce a construction that provides similar information, and we use it to describe a handful of examples which will be useful later.

\begin{definition}
We define $\CatOf{FilteredSpaces}$ to be the category of sequences \[X_{-1} \to X_0 \to X_1 \to X_2 \to \cdots\] of topological spaces with each map the inclusion of a closed subspace.  A morphism in this category is a natural transformation of such diagrams (without further restriction).
\end{definition}

\begin{definition}
We define a functor \[\portrait: \CatOf{PipeRings}_\infty \to \CatOf{FilteredSpaces},\]
called the \emph{portrait} of a pipe ring.  The filtration level $\portrait(R)_n$ is given by the set of those isomorphism classes of closed $m$-ideals of $R$ which are of length $m \le n$.  Each of these spaces is topologized by setting the closure of a point $I \in \portrait(R)$ to be the collection of ideals which contain it, then closing under finite union.  That is, the closure of a point gives a sub-filtered space of the portrait $\portrait(R)$.
\end{definition}

Before we proceed to the examples, two brief remarks are in order.

\begin{remark}
Our definition of the portrait functor lacks an accompanying locally ringed structure sheaf.  We have a definition for the localization of a pipe ring, and so could make an ostensible guess as to a definition of the structure sheaf by mimicking the classical setting, but it doesn't appear to lead where we would like.  So, we omit it, with some disappointment.
\end{remark}

\begin{remark}
The reader should also be warned that, even were we equipped with a good notion of structure sheaf, the gluing of affine portraits to form non-affine portraits also seems to be poorly behaved.  The most basic request of a successful such theory would be to form a formal analogue of $\mathbb{P}^1$, by joining two copies of formal affine space $k\powser{x}$ and $k\powser{y}$ along the ``punctured'' formal affine space $k\(z\)$, using the gluing data of $x \mapsto z$ and $y \mapsto z^{-1}$.  However, this second map is obviously malformed, owing to the fact that $z$ and $z^{-1}$ play different privileged roles in the formal Laurent series ring $k\(z\)$.  We do not pretend that these portraits are anything like a step on the road to a theory of gluing for formal schemes.
\end{remark}

\subsubsection{Example: $k\powser{x} \to k\(x\)$}

The basic recipe is to draw a point for each power of each closed prime $n$-ideal, to label the points by the index $n$, and to topologize the space by defining the closure of a point to be the collection of ideals which contain it.

As a first example, let's begin with $k\powser{x}$, where $k$ is a finite field.  This $0$-pipe ring has a single closed point corresponding to the kernel of the map $k\powser{x} \to k$;  we represent it by a large dot.  Then, there is an ascending sequence of closed nilpotent thickenings of this closed point given by maps
\[
k\powser{x} \to k[x]/x^n
\]
onto nilpotent extensions of $k$.  Each of these targets is a finite ring, and so the corresponding ideal is a $(-1)$-ideal.  Together, we imagine these as an infinite sequence of smaller dots, topologized as indicated.  Finally, there is a single $0$-ideal, corresponding to the identity map and the ideal $(0)$, which we draw as a fuzzy dot at the limit of the smaller ones.  As the generic point, its closure is the entire space.

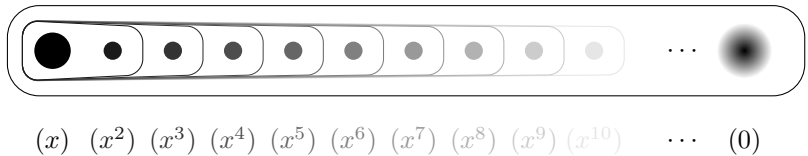
\begin{figure}[htc]
\begin{tikzpicture}[scale=4]
\fill [color=black] (0, 0) circle (0.4ex);
\fill [color=black!90] (0:0.2) circle (0.2ex);
\fill [color=black!80] (0:0.4) circle (0.2ex);
\fill [color=black!70] (0:0.6) circle (0.2ex);
\fill [color=black!60] (0:0.8) circle (0.2ex);
\fill [color=black!50] (0:1.0) circle (0.2ex);
\fill [color=black!40] (0:1.2) circle (0.2ex);
\fill [color=black!30] (0:1.4) circle (0.2ex);
\fill [color=black!20] (0:1.6) circle (0.2ex);
\fill [color=black!10] (0:1.8) circle (0.2ex);
\fill [inner color=black, color=white] (0:2.3) circle (0.6ex);

\draw [color=black!10,rounded corners=6pt] (-0.1,-0.1) -- (-0.1,0.1) -- (1.90,0.08) -- (1.90,-0.08) -- cycle;
\draw [color=black!20,rounded corners=6pt] (-0.1,-0.1) -- (-0.1,0.1) -- (1.70,0.08) -- (1.70,-0.08) -- cycle;
\draw [color=black!30,rounded corners=6pt] (-0.1,-0.1) -- (-0.1,0.1) -- (1.50,0.08) -- (1.50,-0.08) -- cycle;
\draw [color=black!40,rounded corners=6pt] (-0.1,-0.1) -- (-0.1,0.1) -- (1.30,0.08) -- (1.30,-0.08) -- cycle;
\draw [color=black!50,rounded corners=6pt] (-0.1,-0.1) -- (-0.1,0.1) -- (1.10,0.08) -- (1.10,-0.08) -- cycle;
\draw [color=black!60,rounded corners=6pt] (-0.1,-0.1) -- (-0.1,0.1) -- (0.90,0.08) -- (0.90,-0.08) -- cycle;
\draw [color=black!70,rounded corners=6pt] (-0.1,-0.1) -- (-0.1,0.1) -- (0.70,0.08) -- (0.70,-0.08) -- cycle;
\draw [color=black!80,rounded corners=6pt] (-0.1,-0.1) -- (-0.1,0.1) -- (0.50,0.08) -- (0.50,-0.08) -- cycle;
\draw [color=black!90,rounded corners=6pt] (-0.1,-0.1) -- (-0.1,0.1) -- (0.30,0.08) -- (0.30,-0.08) -- cycle;
\draw [color=black,rounded corners=12pt] (-0.15,-0.15) -- (-0.15,0.15) -- (2.5,0.15) -- (2.5,-0.15) -- cycle;

\draw (0,-0.3) node{$(x)$};
\draw[color=black!90] (0.2,-0.3) node{$(x^2)$};
\draw[color=black!80] (0.4,-0.3) node{$(x^3)$};
\draw[color=black!70] (0.6,-0.3) node{$(x^4)$};
\draw[color=black!60] (0.8,-0.3) node{$(x^5)$};
\draw[color=black!50] (1.0,-0.3) node{$(x^6)$};
\draw[color=black!40] (1.2,-0.3) node{$(x^7)$};
\draw[color=black!30] (1.4,-0.3) node{$(x^8)$};
\draw[color=black!20] (1.6,-0.3) node{$(x^9)$};
\draw[color=black!10] (1.8,-0.3) node{$(x^{10})$};
\draw (2.1,0.0) node{$\cdots$};
\draw (2.1,-0.3) node{$\cdots$};
\draw (2.3,-0.3) node{$(0)$};
\end{tikzpicture}
\caption{Portrait of $k\powser{x}$. The solid black dots correspond to $(-1)$-ideals, the fuzzy dot to a $0$-ideal.}
\end{figure}

Continuous maps $k\powser{x} \to R$ pick out elements which can be either nilpotent or merely topologically nilpotent.  The difference is detected by whether the corresponding ideal is a $(-1)$- or $0$-ideal --- an observation that will be useful as we now study the ind-profinite ring $x^{-1}k\powser{x}$.  This ind-object is defined by iterating the multiplication-by-$x$ map, which has a visible action on the ideals: it sends $(x^n)$ to $(x^{n+1})$ and $(0)$ to $(0)$.  Taking the colimit, we see that the subset on which $x^{-1}k\powser{x}$ is supported is $(0)$, and the portrait for $k\(x\)$ reflects this.  We labeled the point corresponding to $(0)$ with a $1$, to reflect that it's the kernel of a morphism with target in $\CatOf{PipeRings}_1$.  It is also immediately apparent from functoriality of the portrait construction that a morphism of pipe rings $k\(x\) \to R$ cannot send $x$ to a nilpotent element, as the inclusion of the portrait for $k\powser{x} / x^n$ into the one for $k\powser{x}$ evidently does not factor through the one for $k\(x\)$.

\begin{figure}[htc]
\begin{tikzpicture}[scale=4]
\fill [inner color=black, color=white] (0:2.3) circle (0.8ex);
\draw [fill=white,color=white] (0:2.3) circle (0.3ex);
\draw (0:2.3) node{$1$};

\draw [color=black,rounded corners=12pt] (-0.15,-0.15) -- (-0.15,0.15) -- (2.5,0.15) -- (2.5,-0.15) -- cycle;

\draw (2.3,-0.3) node{$(0)$};
\end{tikzpicture}
\caption{Portrait of $x^{-1} k\powser{x} = k\(x\)$.}
\end{figure}
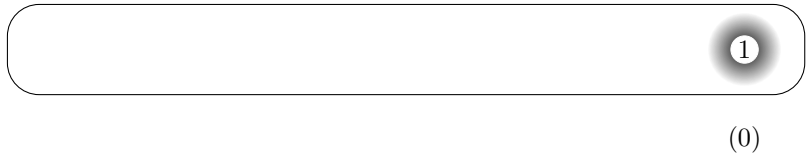

\begin{remark}
Having worked this example, one can explore the obvious definition of the locally ringed structure sheaf that could be attached to this space.  The local ring assigned to the point $(x)$ is the ring $k\powser{x}$ itself, but every other point is assigned the field $k\(x\)$, making it too coarse an invariant to effectively distinguish between the other points we've added.  It's unclear how to correct that construction to something more sensitive.
\end{remark}

\begin{remark}
This same method produces portraits for $\Z_p$ and $\mathbb{Q}_p$, isomorphic to the ones given above, though it's helpful when working to keep an ``arithmetic direction'' distinct from the rest.
\end{remark}

\begin{remark}
Throughout, we have taken $k$ to be a finite field, and this is an inescapable feature of our set-up.  Similar to $k\(x\)$ in this example, more general local fields can be instantiated as $n$-pipe rings for $n > -1$; see work of Kato~\cite[Section 1.2]{Kato}.
\end{remark}

\subsubsection{Example: $k\powser{x,y} \to y^{-1} k\powser{x,y} \to (y^{-1}k\powser{x,y})_{(x)}^{\wedge}$}

This is the main example.  As before, we proceed in stages, beginning with the $0$-pipe ring $k\powser{x,y}$.  This has both closed $(-1)$-ideals and $0$-ideals: examples of $(-1)$-ideals include $(x, y)$, $(x^i, y^j)$ for $i, j > 0$, and $(x^2, xy, y^2)$, whereas examples of $0$-ideals include $(x)$, $(x+y)$, $(x^n)$ for $n > 0$, and $(xy)$.  To analyze the topology, notice first that the closure of an $n$-ideal may only contain $m$-ideals for $m \le n$.  For instance, $0$-ideal $(x^2)$ is contained in both the $0$-ideal $(x)$ and the $(-1)$-ideal $(x^2, y)$.

To give just an abbreviation of the full portrait, we draw only the points for powers of prime ideals, as in Figure~\ref{fig:Pipe3}.
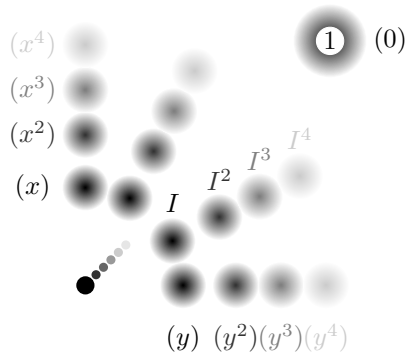
\begin{figure}
\begin{tikzpicture}[scale=2]
\fill [color=black] (0, 0) circle (0.4ex);
\fill [color=black!80] (45:0.10) circle(0.2ex);
\fill [color=black!60] (45:0.17) circle(0.2ex);
\fill [color=black!40] (45:0.24) circle(0.2ex);
\fill [color=black!20] (45:0.31) circle(0.2ex);
\fill [color=black!10] (45:0.38) circle(0.2ex);

\fill [inner color=black, color=white] (0:0.65) circle (1ex);
\fill [inner color=black, color=white] (27:0.65) circle (1ex);
\fill [inner color=black, color=white] (63:0.65) circle (1ex);
\fill [inner color=black, color=white] (90:0.65) circle (1ex);

\fill [inner color=black!80, color=white] (0:1) circle (1ex);
\fill [inner color=black!80, color=white] (27:1) circle (1ex);
\fill [inner color=black!80, color=white] (63:1) circle (1ex);
\fill [inner color=black!80, color=white] (90:1) circle (1ex);

\fill [inner color=black!50, color=white] (0:1.3) circle (1ex);
\fill [inner color=black!50, color=white] (27:1.3) circle (1ex);
\fill [inner color=black!50, color=white] (63:1.3) circle (1ex);
\fill [inner color=black!50, color=white] (90:1.3) circle (1ex);

\fill [inner color=black!20, color=white] (0:1.6) circle (1ex);
\fill [inner color=black!20, color=white] (27:1.6) circle (1ex);
\fill [inner color=black!20, color=white] (63:1.6) circle (1ex);
\fill [inner color=black!20, color=white] (90:1.6) circle (1ex);

\draw (-0.35,0.65) node{$(x)$};
\draw[color=black!80] (-0.35,1) node{$(x^2)$};
\draw[color=black!50] (-0.35,1.3) node{$(x^3)$};
\draw[color=black!20] (-0.35,1.6) node{$(x^4)$};
\draw (0.65,-0.35) node{$(y)$};
\draw[color=black!80] (1,-0.35) node{$(y^2)$};
\draw[color=black!50] (1.3,-0.35) node{$(y^3)$};
\draw[color=black!20] (1.6,-0.35) node{$(y^4)$};
\draw[color=black] (27:0.65) ++ (0,0.25) node{$I$};
\draw[color=black!80] (27:1.0) ++ (0,0.25) node{$I^2$};
\draw[color=black!50] (27:1.3) ++ (0,0.25) node{$I^3$};
\draw[color=black!20] (27:1.6) ++ (0,0.25) node{$I^4$};

\fill [inner color=black, color=white] (45:2.3) circle (1.6ex);
\draw [fill=white,color=white] (45:2.3) circle (0.6ex);
\draw (45:2.3) node{$1$};

\draw (45:2.3) ++ (0.4,0) node{$(0)$};
\end{tikzpicture}
\caption{Portrait of $k\powser{x,y}$.  Here, $I$ is a closed prime $0$-ideal.  The horizontal and vertical axes are labeled by $(y)$ and $(x)$ respectively, as these are the subschemes selected by these ideals. \label{fig:Pipe3}}
\end{figure}
The closed point sits at the bottom left, together with its string of powers.  The prime $0$-ideals are also drawn in, with their powers marching away along lines of their own.  The ideal $(0)$ is the generic point: it is not contained in the closure of any other point, and its closure is the entire space.  This abbreviation is reasonable because the portrait is topologized: we are working in a UFD, so an arbitrary closed ideal will be uniquely characterized by a finite collection of these points.  We provide pictures of the closures of the non-prime ideals $(y^3)$ and $(x^3 (x+y)^2 y)$ in Figure~\ref{fig:Closures}.

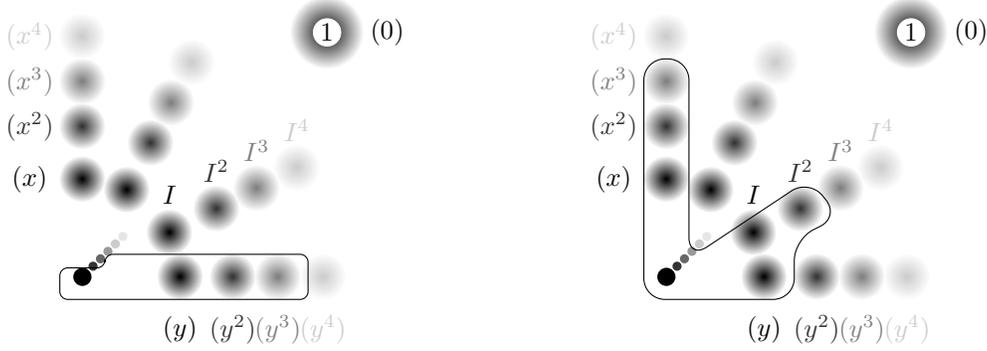
\begin{figure}
\begin{tikzpicture}[scale=2]
\fill [color=black] (0, 0) circle (0.4ex);
\fill [color=black!80] (45:0.10) circle(0.2ex);
\fill [color=black!60] (45:0.17) circle(0.2ex);
\fill [color=black!40] (45:0.24) circle(0.2ex);
\fill [color=black!20] (45:0.31) circle(0.2ex);
\fill [color=black!10] (45:0.38) circle(0.2ex);

\fill [inner color=black, color=white] (0:0.65) circle (1ex);
\fill [inner color=black, color=white] (27:0.65) circle (1ex);
\fill [inner color=black, color=white] (63:0.65) circle (1ex);
\fill [inner color=black, color=white] (90:0.65) circle (1ex);

\fill [inner color=black!80, color=white] (0:1) circle (1ex);
\fill [inner color=black!80, color=white] (27:1) circle (1ex);
\fill [inner color=black!80, color=white] (63:1) circle (1ex);
\fill [inner color=black!80, color=white] (90:1) circle (1ex);

\fill [inner color=black!50, color=white] (0:1.3) circle (1ex);
\fill [inner color=black!50, color=white] (27:1.3) circle (1ex);
\fill [inner color=black!50, color=white] (63:1.3) circle (1ex);
\fill [inner color=black!50, color=white] (90:1.3) circle (1ex);

\fill [inner color=black!20, color=white] (0:1.6) circle (1ex);
\fill [inner color=black!20, color=white] (27:1.6) circle (1ex);
\fill [inner color=black!20, color=white] (63:1.6) circle (1ex);
\fill [inner color=black!20, color=white] (90:1.6) circle (1ex);

\draw (-0.35,0.65) node{$(x)$};
\draw[color=black!80] (-0.35,1) node{$(x^2)$};
\draw[color=black!50] (-0.35,1.3) node{$(x^3)$};
\draw[color=black!20] (-0.35,1.6) node{$(x^4)$};
\draw (0.65,-0.35) node{$(y)$};
\draw[color=black!80] (1,-0.35) node{$(y^2)$};
\draw[color=black!50] (1.3,-0.35) node{$(y^3)$};
\draw[color=black!20] (1.6,-0.35) node{$(y^4)$};
\draw[color=black] (27:0.65) ++ (0,0.25) node{$I$};
\draw[color=black!80] (27:1.0) ++ (0,0.25) node{$I^2$};
\draw[color=black!50] (27:1.3) ++ (0,0.25) node{$I^3$};
\draw[color=black!20] (27:1.6) ++ (0,0.25) node{$I^4$};

\fill [inner color=black, color=white] (45:2.3) circle (1.6ex);
\draw [fill=white,color=white] (45:2.3) circle (0.6ex);
\draw (45:2.3) node{$1$};
\draw (45:2.3) ++ (0.4,0) node{$(0)$};

\draw [color=black,rounded corners=3pt] (-0.15,-0.15) -- (-0.15,0.06) -- (0.15,0.06) -- (0.15,0.15) -- (1.5,0.15) -- (1.5,-0.15) -- cycle;
\end{tikzpicture}
\hspace{2cm}
\begin{tikzpicture}[scale=2]
\fill [color=black] (0, 0) circle (0.4ex);
\fill [color=black!80] (45:0.10) circle(0.2ex);
\fill [color=black!60] (45:0.17) circle(0.2ex);
\fill [color=black!40] (45:0.24) circle(0.2ex);
\fill [color=black!20] (45:0.31) circle(0.2ex);
\fill [color=black!10] (45:0.38) circle(0.2ex);

\fill [inner color=black, color=white] (0:0.65) circle (1ex);
\fill [inner color=black, color=white] (27:0.65) circle (1ex);
\fill [inner color=black, color=white] (63:0.65) circle (1ex);
\fill [inner color=black, color=white] (90:0.65) circle (1ex);

\fill [inner color=black!80, color=white] (0:1) circle (1ex);
\fill [inner color=black!80, color=white] (27:1) circle (1ex);
\fill [inner color=black!80, color=white] (63:1) circle (1ex);
\fill [inner color=black!80, color=white] (90:1) circle (1ex);

\fill [inner color=black!50, color=white] (0:1.3) circle (1ex);
\fill [inner color=black!50, color=white] (27:1.3) circle (1ex);
\fill [inner color=black!50, color=white] (63:1.3) circle (1ex);
\fill [inner color=black!50, color=white] (90:1.3) circle (1ex);

\fill [inner color=black!20, color=white] (0:1.6) circle (1ex);
\fill [inner color=black!20, color=white] (27:1.6) circle (1ex);
\fill [inner color=black!20, color=white] (63:1.6) circle (1ex);
\fill [inner color=black!20, color=white] (90:1.6) circle (1ex);

\draw (-0.35,0.65) node{$(x)$};
\draw[color=black!80] (-0.35,1) node{$(x^2)$};
\draw[color=black!50] (-0.35,1.3) node{$(x^3)$};
\draw[color=black!20] (-0.35,1.6) node{$(x^4)$};
\draw (0.65,-0.35) node{$(y)$};
\draw[color=black!80] (1,-0.35) node{$(y^2)$};
\draw[color=black!50] (1.3,-0.35) node{$(y^3)$};
\draw[color=black!20] (1.6,-0.35) node{$(y^4)$};
\draw[color=black] (27:0.65) ++ (0,0.25) node{$I$};
\draw[color=black!80] (27:1.0) ++ (0,0.25) node{$I^2$};
\draw[color=black!50] (27:1.3) ++ (0,0.25) node{$I^3$};
\draw[color=black!20] (27:1.6) ++ (0,0.25) node{$I^4$};

\fill [inner color=black, color=white] (45:2.3) circle (1.6ex);
\draw [fill=white,color=white] (45:2.3) circle (0.6ex);
\draw (45:2.3) node{$1$};
\draw (45:2.3) ++ (0.4,0) node{$(0)$};

\draw [color=black,rounded corners=8pt] (-0.15,-0.15) -- (-0.15,1.45) -- (0.15,1.45) -- (0.15,0.12) -- (0.95,0.65) -- (1.15,0.4) -- (0.85,0.25) -- (0.85,-0.15) -- cycle;

\end{tikzpicture}
\caption{The closures of the ideals $(y^3)$ and $(x^3 (x+y)^2 y)$ in $k\llbracket x, y\rrbracket$.  Here, $I$ now represents the specific closed prime $1$-ideal $(x+y)$. \label{fig:Closures}}
\end{figure}

We now seek to understand the intermediate ring $y^{-1} k\powser{x,y}$.  Just as before, multiplication by $y$ sends a closed set corresponding to $I$ to the closed set for $yI$.  To produce a portrait of this pipe ring, we identify the closed sets associated to $I$ and $J$ when
\[
\bigcup_k \overline{y^k I} = \bigcup_k \overline{y^k J}.
\]
In terms of the abbreviated portrait, this action admits a simple description: a closed set in the original portrait is sent to its union with $\{(y), (y^2), \ldots\}$.

%

However, there are an awful lot of ideals (i.e., closed sets) in $y^{-1}k\powser{x,y}$.  This uncomfortable fact is completely done away with by completing against the ideal $(x)$, producing the ring $(y^{-1}k\powser{x,y})^{\wedge}_{(x)}$.  The Weierstrass preparation theorem tells us that any element $f(x,y)$ of this ring takes the form
\[
f(x,y) = x^n \cdot g(x,y),
\] 
where $g(x,y)$ is a unit (or equivalently, considered as a power series in $x$ its constant coefficient is a unit).  It is instructive to note that this completion allows for power series that extend infinitely in both directions in $y$, provided that the coefficients of the negative powers of $y$ lie in increasing powers of the ideal $(x)$.  This is a proof-sketch of the fact that there is an isomorphism (of pipe rings)
\[
(y^{-1}k\powser{x,y})^{\wedge}_{(x)} \cong k\(y\)\powser{x}.
\]

Pictorially, this completion also has a very simple description: two closed sets are identified if they have the same intersection with the set $\{(x),(x^2),\ldots\}$.  For example, the closed sets defined by the ideals $(x^2(x+y)^3y^2)$ and $(x^2)$ become identified because $(x+y)^3y^2$ is now a unit.

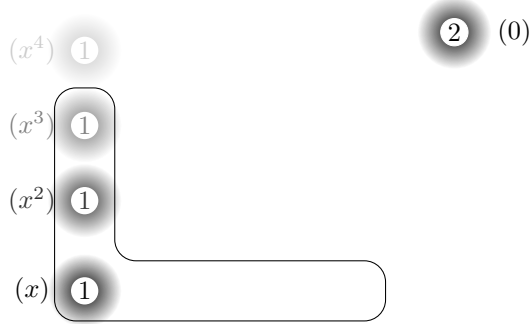
\begin{figure}
\begin{tikzpicture}[scale=2]

\fill [inner color=black, color=white] (90:0.0) circle (1.6ex);
\fill [inner color=black!80, color=white] (90:0.6) circle (1.6ex);
\fill [inner color=black!50, color=white] (90:1.1) circle (1.6ex);
\fill [inner color=black!20, color=white] (90:1.6) circle (1.6ex);

\fill [fill=white,color=white] (90:0) circle (0.6ex);
\fill [fill=white,color=white] (90:0.6) circle (0.6ex);
\fill [fill=white,color=white] (90:1.1) circle (0.6ex);
\fill [fill=white,color=white] (90:1.6) circle (0.6ex);

\draw (90:0) node{$1$};
\draw[color=black!80] (90:0.6) node{$1$};
\draw[color=black!50] (90:1.1) node{$1$};
\draw[color=black!20] (90:1.6) node{$1$};

\draw (-0.35,0.0) node{$(x)$};
\draw[color=black!80] (-0.35,0.6) node{$(x^2)$};
\draw[color=black!50] (-0.35,1.1) node{$(x^3)$};
\draw[color=black!20] (-0.35,1.6) node{$(x^4)$};

\draw [color=black,rounded corners=8pt] (-0.20,-0.20) -- (2.0,-0.20) -- (2.0,0.20) -- (0.20,0.20) -- (0.20,1.35) -- (-0.20,1.35) -- cycle;

\fill [inner color=black, color=white] (35:3) circle (1.6ex);
\draw [fill=white,color=white] (35:3) circle (0.6ex);
\draw (35:3) node{$2$};
\draw (35:3) ++ (0.4,0) node{$(0)$};
\end{tikzpicture}
\caption{Portrait of $(y^{-1} k\powser{x,y})^\wedge_{(x)}$, with the closure of $(x^3)$ indicated.  This closed set has been drawn with the distention to suggest that it's been inherited from the previous stage $y^{-1} k\powser{x,y}$ of the construction.}
\end{figure}

\subsection{The Lubin--Tate pipe rings}

For the remainder of the paper, we fix the following data: a finite field $k$ of positive characteristic $p$; a finite, positive integer $h$; a formal group $\Gamma$ of finite $p$-height $h$ over $k$; and a weakly decreasing infinite sequence of nonnegative integers \[h = h_0 \ge h_1 \ge h_2 \ge \cdots \ge h_n \ge \cdots \ge 0.\]  As indicated, we will very often refer to $h_0$ simply as $h$, as it plays a privileged role in the theory.

Having established the basics of a theory of rings which includes profinite rings, we turn to our second desideratum: the instantiation of $\pi_0 L_{K(t)} E_n$ and its associated localization morphism.  As in the classical case, our results concern formal groups and formal group laws, and we now introduce the appropriate notions.

\begin{definition}
The \textit{pipe spectrum} (or simply \textit{spectrum}) of an $n$-pipe ring $R$, denoted $\Spp(R)$, is defined to be the corepresentable functor $\Spp(R)(-) = \CatOf{PipeRings}_\infty(R, -)$.  Note that this coincides with $\Spec$ when restricted to $\CatOf{PipeRings}_{-1}$ and with $\Spf$ when restricted to $\CatOf{PipeRings}_0$.  We define the \textit{formal affine line} over $R$ to be \[\A^1_R = \Spp(R\powser{x}),\] where $R\powser{x}$ is the object of $\CatOf{PipeRings}_n = \Pro \Ind \CatOf{PipeRings}_{n-1}$ given by the sequential limit \[ \cdots \to R\{x^m, x^{m-1}, \ldots, x^0\} \to \cdots \to R\{x^2, x^1, x^0\} \to R\{x^1, x^0\} \to R\{x^0\},\] each of which carries the expected truncated polynomial structure.  (The notation is meant to indicate the distinction from the process of forming a polynomial ring and taking quotients, which again is not necessarily available to us.)  A \textit{$1$-dimensional formal variety} over $R$ is a functor $V: \CatOf{PipeRings}_\infty \to \CatOf{Sets}$ isomorphic to $\A^1_R$; a (smooth, $1$-dimensional, commutative) \textit{formal group} over $R$ is a group object structure on such a $V$; and a \textit{coordinatized formal group} over $R$ is an abelian group object structure on $\A^1_R$ itself.  Finally, the \textit{formal group law} associated to a coordinatized formal group is given by the representing power series in $\real R \powser{x,y}$ for the multiplication map \[\A^1_R \times \A^1_R \to \A^1_R.\]
\end{definition}

\begin{lemma}
A coordinatized formal group and a formal group law over a fixed pipe ring $R$ are equivalent data. \qed
\end{lemma}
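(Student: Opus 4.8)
The plan is to deduce this from the Yoneda lemma, once one knows how finite fibre powers of $\A^1_R$ are corepresented. The key preliminary is: for $k = 1, 2, 3$ the $k$-fold fibre product $\A^1_R \times_{\Spp R} \cdots \times_{\Spp R} \A^1_R$, formed among functors $\CatOf{PipeRings}_\infty \to \CatOf{Sets}$ over $\Spp R$, is corepresented by the pipe ring $R\powser{x_1, \ldots, x_k}$, defined as the sequential limit of the truncated multivariable polynomial pipe rings $R\{x_1^{e_1}\cdots x_k^{e_k} : e_1 + \cdots + e_k \le m\}$. Equivalently, $R\powser{x_1, \ldots, x_k}$ is the $k$-fold coproduct of $R\powser{x}$ with itself in pipe rings under $R$: a compatible family of $R$-algebra maps $R\powser{x} \to S$ is the same datum as a single such map out of $R\powser{x_1, \ldots, x_k}$, because the indexing posets $\{e_1 + \cdots + e_k \le m\}$ and $\{e_i \le a_i \text{ for all } i\}$ are mutually cofinal, and at each finite stage $R[x_1, \ldots, x_k]/(x_1^{a_1 + 1}, \ldots, x_k^{a_k + 1})$ is the honest coproduct $R[x_1]/x_1^{a_1 + 1} \otimes_R \cdots \otimes_R R[x_k]/x_k^{a_k + 1}$ in $\CatOf{PipeRings}_n$.

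Granting this, the rest is translation. By Yoneda a map $\A^1_R \times_{\Spp R} \A^1_R \to \A^1_R$ over $\Spp R$ is the same as a map of pipe rings $\mu \colon R\powser{x} \to R\powser{x,y}$ under $R$, which is determined by the single element $F := \mu(x) \in \real{R}\powser{x,y}$; likewise a section $\Spp R \to \A^1_R$ over $\Spp R$ and a self-map $\A^1_R \to \A^1_R$ over $\Spp R$ are recorded by an element of $\real{R}$ and of $\real{R}\powser{x}$ respectively. An abelian group-object structure on $\A^1_R$ is precisely such a multiplication, unit, and inverse subject to the standard commutative diagrams, and applying Yoneda to each diagram --- using the cases $k = 2$ and $k = 3$ of the preliminary to name the sources --- rewrites them one at a time as power-series identities over $\real{R}$: associativity becomes $F(F(x,y),z) = F(x, F(y,z))$, commutativity becomes $F(x,y) = F(y,x)$, and the unit axiom, once one notes (as is implicit in the word ``coordinatized'') that the coordinate is chosen so that the identity section is the origin, becomes $F(x,0) = x = F(0,y)$. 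These are exactly the axioms of a one-dimensional commutative formal group law over $\real{R}$, and conversely any such $F$ visibly reassembles all the structure maps into an abelian group object. The inverse carries no further information, since $F(x, \iota(x)) = 0$ has a unique solution $\iota(x) \in \real{R}\powser{x}$ with $\iota(0) = 0$ by successive approximation modulo the powers of $x$ --- exactly as in the classical theory, which is why a formal group law needs no inverse axiom.

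The step I expect to be the genuine obstacle is the preliminary: one cannot simply invoke ``$\Spp$ carries coproducts to products'', since coproducts in $\CatOf{PipeRings}_\infty$ need not exist at all, the paper itself warning that even the operation $R[y]/(xy - 1)$ can fail to be available. The content is that for the \emph{specific} pro-ind objects $R\powser{x}$ and $R\powser{x_1, \ldots, x_k}$ the relevant colimit does exist and is computed by the cofinal reindexing above; once this is settled, every remaining verification is a mechanical unwinding of Yoneda along diagrams of finite products. One further point deserves to be stated explicitly: a bare abelian group-object structure on $\A^1_R$ may place its identity at a point $c \in \real{R}$ other than the origin --- necessarily a nilpotent element, since a section corresponds to a pipe-ring map $R\powser{x} \to R$ --- but translating the coordinate by $c$ is a genuine automorphism of the pro-ind object $R\powser{x}$ precisely because $c$ is nilpotent, so the normalization $F(x,0) = x = F(0,y)$ costs no generality.
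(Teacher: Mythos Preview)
The paper states this lemma without proof (just the tombstone), so there is nothing to compare against; your Yoneda-based unwinding is exactly the expected route, and the identification of $\A^1_R \times_{\Spp R} \cdots \times_{\Spp R} \A^1_R$ with $\Spp R\powser{x_1,\ldots,x_k}$ via levelwise tensor products and cofinal reindexing is the right way to handle the preliminary.

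One small inaccuracy is worth flagging. You assert that the identity section, being a pipe $R$-algebra map $R\powser{x} \to R$, must land on a \emph{nilpotent} element $c \in \real{R}$, on the grounds that such a map factors through some $R[x]/x^m$. That factorization is only forced when $R$ sits in $\CatOf{PipeRings}_n$ as a constant pro-object; in general $R$ carries its own outermost pro-structure, and a map from the pro-system $\{R[x]/x^m\}_m$ into $R$ need not factor through any single finite stage. For instance, if $R = k\powser{t}$ then $R\powser{x} \cong k\powser{t,x}$ as $0$-pipe rings, and the section $x \mapsto t$ is a legitimate pipe map even though $t$ is not nilpotent. The correct statement is that $c$ lies in $\A^1_R(R)$ --- ``topologically nilpotent'' in whatever sense the pipe structure on $R$ affords --- and translation by such a $c$ is still an automorphism of $R\powser{x}$, for the same reason that $x \mapsto x - t$ is an automorphism of $k\powser{t,x}$. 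Your conclusion survives unchanged; only the one-line justification needs this adjustment.
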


\begin{definition}
Suppose that $R$ is a pipe ring, and select a formal group law $F$ over $R$ with $p$-series given by the formula \[[p]_F(x) = px + \sum_{i=2}^\infty a_i x^i.\]  Then, $F$ is said to be of \textit{$p$-height $h$} if $R$ is complete with respect to a closed ideal $I$, $I$ contains $p$, $I$ contains $a_i$ for every $i < p^h$, and the coefficient $a_{p^h}$ is invertible in $\real{R/I}$.
\end{definition}

Let us recall the basics of the Lubin--Tate theory of deformations of $p$-complete formal group laws of finite height.  The space of infinitesimal deformations of the formal group $\Gamma$ chosen above is represented by a formal affine variety $LT$ of dimension $h-1$ over the ring $\W_k$ of Witt vectors over $k$.  This space is not canonically coordinatized --- rather, several ``good'' choices of coordinates exist:
\begin{definition}
Let $\widetilde \Gamma$ be a versal deformation of $\Gamma$ to $LT$, and select a coordinate on $\widetilde \Gamma$.  Then, a \textit{Lubin--Tate coordinate system} on $LT$ is a set of coordinate functions $p, u_1, \ldots, u_{h-1}$ satisfying the following property for each $t$:
\[[p]_{\widetilde \Gamma}(x) \equiv u_t x^{p^t} \pmod{(p, u_1, \ldots, u_{t-1}, x^{1+p^t})}.\]
(In words, Lubin--Tate coordinates are defined inductively, with indeterminacy governed by a flag of ideals.)  Such coordinate systems always exist.  Moreover, the specific choice of versal deformation $\widetilde \Gamma$ does not affect whether a given coordinate system is a Lubin--Tate coordinate system.
\end{definition}

Consider the ring $\pi_0 E_h \cong \mathbb{W}_k \llbracket u_1, \ldots, u_{h-1} \rrbracket$, where the coordinates $u_1, \ldots, u_{h-1}$ form a Lubin--Tate coordinate system.  This ring is equipped with a topology specified by the ideal $I_h = (p, u_1, \ldots, u_{h - 1})$, which can be equivalently interpreted as a $0$-pipe structure.

\begin{definition}
Let $\pi_0 E_h$ denote the $0$-pipe ring above.  We inductively define the $n$-pipe ring \[\pi_0 L_{K(h_n)} \cdots L_{K(h_1)} E_h = (u_{h_n}^{-1}(\pi_0 L_{K(h_{n-1})} \cdots L_{K(h_1)} E_h))^\wedge_{I_{h_n}}\] by considering the rows of the following diagram as ind-systems and the diagram itself as a pro-system of its rows:
\begin{center}
\begin{tikzcd}
\pi_0 L_{K(h_{n-1})} \cdots L_{K(h_1)} E_{h} / I_{h_n} \arrow{r}{- \cdot u_{h_n}} \arrow[leftarrow]{d} & \pi_0 L_{K(h_{n-1})} \cdots L_{K(h_1)} E_{h} / I_{h_n} \arrow[leftarrow]{d} \arrow{r}{- \cdot u_{h_n}} & \cdots \\
\pi_0 L_{K(h_{n-1})} \cdots L_{K(h_1)} E_{h} / I_{h_n}^2 \arrow{r}{- \cdot u_{h_n}} & \pi_0 L_{K(h_{n-1})} \cdots L_{K(h_1)} E_{h} / I_{h_n}^2 \arrow{r}{- \cdot u_{h_n}} & \cdots. \\
\vdots \arrow{u} & \vdots \arrow{u} &
\end{tikzcd}
\end{center}
These quotients are defined using Proposition~\ref{prop:RigidQuotient}.  Using Hovey's result~\cite[Lemma 2.3]{Hovey}, the realization of this pipe ring is indeed the zeroth homotopy ring of the indicated ring spectrum.
\end{definition}

\begin{lemma}
The $n$-pipe ring $\pi_0 L_{K(h_n)} \cdots L_{K(h_1)} E_{h}$ is bifine. \qed
\end{lemma}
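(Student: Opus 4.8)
The plan is to induct on the length $n$, with inductive hypothesis that $R := \pi_0 L_{K(h_{n-1})} \cdots L_{K(h_1)} E_h$ is a bifine, rigid $(n-1)$-pipe ring whose finite constituents are quotients of $\pi_0 E_h = \W_k\powser{u_1,\dots,u_{h-1}}$, that $\real R$ is an integral domain, and that $u_{h_n}$ is a nonzero element of $\real R$. The base case $n=0$ is immediate: the $I_h$-adic filtration exhibits $\pi_0 E_h$ as a pro-system of \emph{finite} rings, since $k$ is finite, hence as a genuine $0$-pipe ring; every $0$-pipe is fine, and the surjectivity of the truncations $\pi_0 E_h / I_h^{\alpha+1} \to \pi_0 E_h / I_h^\alpha$ witnesses cofineness, while the ring-theoretic hypotheses are clear for the power series ring $\W_k\powser{\vec u}$.

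For the inductive step I would read off the rigid presentation of $S := (u_{h_n}^{-1}R)^\wedge_{I_{h_n}}$ and check both halves of bifineness layer by layer. As an $n$-pipe, $S$ is the pro-system, running over $\alpha$ along the (surjective) truncation maps of the $I_{h_n}$-adic filtration, of the ind-systems, running over $\beta$ along iterated multiplication by $u_{h_n}$, of the rigid $(n-1)$-pipe rings $R / I_{h_n}^{\alpha}$ produced from $R$ via Proposition~\ref{prop:RigidQuotient} applied to the ideal $I_{h_n}^\alpha \subset \pi_0 E_h$. The inner layers $R / I_{h_n}^\alpha$ inherit bifineness from $R$: the ideal in question is pulled back levelwise along the realization maps of the rigid diagram for $R$, so passing to the quotient preserves the injectivity of the inductive-type transition maps (fineness) and the surjectivity of the projective-type transition maps (cofineness). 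The newly inserted multiplication-by-$u_{h_n}$ maps realize to injections, because $u_{h_n}$ is a non-zerodivisor on $\real R / I_{h_n}^\alpha$ for every $\alpha$: since $I_{h_n} = (p,u_1,\dots,u_{h_n-1})$ is generated by a regular sequence in the Noetherian ring $\real R$, this reduces to $u_{h_n}$ being a non-zerodivisor modulo $I_{h_n}$ itself, which holds because $\real R / I_{h_n}$ is a domain in which $u_{h_n}$ survives nonzero; this supplies the fineness clause, exactly as in the proposition asserting that localization at non-zerodivisors preserves fineness. The newly inserted $I_{h_n}$-adic truncations are surjective on realizations, which is the cofineness clause for the new outer pro-direction. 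Finally, $\real S = (u_{h_n}^{-1}\real R)^\wedge_{I_{h_n}}$ is again an integral domain (by \cite[Lemma~2.3]{Hovey} and its iteration, or by an associated-graded computation) in which $u_{h_{n+1}}$ is nonzero, so the inductive hypothesis persists.

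The step I expect to be the genuine obstacle is cofineness of $S$. The construction hands us $S$ in the pro-of-ind shape native to $\CatOf{Pipes}_n$, whereas the definition of cofine is phrased in the opposite ind-of-pro shape, and one may not simply transpose the two directions: doing so would replace $S$ by an object whose realization is the \emph{uncompleted} localization $u_{h_n}^{-1}\real R$ in place of its $I_{h_n}$-adic completion. What rescues the argument is that cofineness demands only \emph{surjectivity} of the relevant realization maps, and all the surjections needed come from the honest truncation maps of the nested $I_{h_\bullet}$-adic completions, whereas the transverse localizations (multiplication by the $u_{h_\bullet}$) contribute only injective realization maps and so are transparent to the cofineness bookkeeping. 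Carrying this out thus amounts to verifying that in the rigid presentation of $\pi_0 L_{K(h_n)} \cdots L_{K(h_1)} E_h$ every projective transition map is surjective on set-theoretic realizations and every inductive one is injective — which is exactly the content assembled in the previous paragraph.
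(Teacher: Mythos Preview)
The paper records no proof whatsoever --- the statement carries only a \qed marker --- so your write-up already goes well beyond what the authors supply. Your base case and your treatment of \emph{fineness} in the inductive step are sound: the constituent $(n-1)$-pipes $R/I_{h_n}^\alpha$ are bifine by induction, and the new ind-direction maps (multiplication by $u_{h_n}$) realize to injections by the regularity argument you give, which is exactly the earlier observation in the paper that localization at non-zerodivisors preserves fineness.

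The gap is precisely where you yourself locate it, and your final paragraph does not close it. The definition of \emph{cofine} for an $n$-pipe demands a presentation as an \emph{ind}-system of \emph{pro}-systems of cofine $(n-1)$-pipes, together with surjectivity of $\real{X} \to \real{X_\lambda}$; the construction of $S$ hands you the transposed pro-of-ind shape. You correctly note that naively swapping the two directions changes the realization (to the uncompleted $u_{h_n}^{-1}\real R$), but then you assert that ``cofineness demands only surjectivity'' and that verifying surjectivity of the projective transition maps in the rigid diagram is enough. That is not what the definition says: surjectivity is a condition \emph{on} a chosen ind-of-pro presentation, not a substitute for producing one. Knowing that every projective map in the rigid multi-index $P_0 \times I_1 \times P_1 \times \cdots \times I_n \times P_n$ realizes to a surjection does not, by itself, exhibit $S$ as an ind-of-pro object in $\CatOf{Pipes}_n$ isomorphic to the given one --- that is exactly the transposition you just said fails. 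To finish, you would need either to build the required ind-of-pro model explicitly and argue it agrees with $S$ in $\CatOf{Pipes}_\infty$, or to prove a general lemma to the effect that a rigid $n$-pipe with surjective projective transitions and injective inductive transitions is automatically cofine. The authors apparently regard this as routine enough to suppress; your sketch identifies the issue honestly but stops short of resolving it.
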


\begin{proposition}
The universal formal group law over $\pi_0 E_{h}$ pulled back to $\pi_0 L_{K(h_n)} \cdots L_{K(h_1)} E_{h}$ is of $p$-height $h_n$.
\end{proposition}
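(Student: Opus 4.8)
\emph{Proof proposal.}
The plan is to verify, clause by clause, that the pulled-back formal group law $F$ over $R := \pi_0 L_{K(h_n)}\cdots L_{K(h_1)} E_{h}$ meets the definition of $p$-height $h_n$, taking as witnessing ideal $I$ the image in $R$ of $I_{h_n} = (p, u_1, \ldots, u_{h_n - 1}) \subset \pi_0 E_h$ under the structure map $\phi\colon \pi_0 E_h \to R$. Three of the clauses are bookkeeping. The ideal $I$ is closed: the construction of $R$ is rigid with all its finite constituent rings presented as quotients of a fixed localization of $\pi_0 E_h$, so Proposition~\ref{prop:RigidQuotient} applies to the $\pi_0 E_h$-ideal $I_{h_n}$ --- this is the very same propagation of $I_{h_n}$ already used to define the quotients $R/I^j$ in the construction of $R$. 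That $p \in I$ is immediate. And $R$ is complete with respect to $I$ because the construction literally presents $R$ as $\lim_j R/I^j$ in $\CatOf{PipeRings}_n$, the $j$-th stage being the $I_{h_n}$-adic quotient of the localization $u_{h_n}^{-1}(\pi_0 L_{K(h_{n-1})}\cdots E_h)$.

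The content of the proof is the behavior of the $p$-series under the Lubin--Tate coordinates. I would first recall that the universal formal group law over $\pi_0 E_h$ is a coordinatized versal deformation of $\Gamma$; since $p, u_1, \ldots, u_{h-1}$ is by hypothesis a Lubin--Tate coordinate system and that property does not depend on the chosen versal deformation, the $p$-series $[p](x) = px + \sum_{i\ge 2} a_i x^i$ satisfies
\[
[p](x) \;\equiv\; u_{h_n}\, x^{p^{h_n}} \pmod{\left(p,\, u_1, \ldots, u_{h_n - 1},\; x^{1 + p^{h_n}}\right)}.
\]
Reading off the coefficient of $x^i$ for each $i \le p^{h_n}$ --- for which the generator $x^{1+p^{h_n}}$ of the modulus contributes nothing --- this is precisely the statement that $a_i \in I_{h_n}$ for all $i < p^{h_n}$ and that $a_{p^{h_n}} \equiv u_{h_n} \pmod{I_{h_n}}$. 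Applying $\phi$, which carries $[p]$ to the $p$-series of $F$ coefficientwise, we get $\phi(a_i) \in I$ for $i < p^{h_n}$ and $\phi(a_{p^{h_n}}) \equiv \phi(u_{h_n}) \pmod{I}$.

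It remains to check that $\phi(u_{h_n})$, hence $\phi(a_{p^{h_n}})$, is invertible in $\real{R/I}$. This is the one step that uses the localization in the construction: $R$ is built from $\pi_0 L_{K(h_{n-1})}\cdots E_h$ by \emph{first} inverting $u_{h_n}$ and only afterward $I_{h_n}$-completing, so $u_{h_n}$ is already a unit in $u_{h_n}^{-1}(\pi_0 L_{K(h_{n-1})}\cdots E_h)$, stays a unit in every $I_{h_n}$-adic quotient, and hence is a unit in the limit $R$; units survive the quotient $R \to R/I$ and survive realization. With all four clauses verified, $F$ is of $p$-height $h_n$. (The degenerate value $h_n = 0$ is handled identically, with $p$ playing the role of $u_{h_n}$: one inverts $p$, completes against the zero ideal, and ``$p$-height $0$'' is exactly invertibility of $p$.)

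I expect the main obstacle to be bookkeeping rather than mathematics. The classical commutative-algebra manipulation above has to be carried out legitimately inside $\CatOf{PipeRings}_n$, which requires that the pipe-theoretic notions of closed ideal, $n$-completeness, quotient, and unit reduce to their ordinary counterparts for the rigid system defining $R$. The load-bearing facts are that $R = \lim_j R/I^j$ holds in the pipe-ring sense --- which is exactly how $R$ was defined --- and that inverting $u_{h_n}$ before completing genuinely makes $\phi(u_{h_n})$ a unit of $R$; both are consequences of the localization and completion formalism set up earlier, so once that is granted the argument is short.
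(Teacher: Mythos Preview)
Your proposal is correct and follows exactly the approach the paper takes: the paper's proof is the single sentence ``This follows from the definition of a Lubin--Tate coordinate system, along with the fact that completion and localization have the expected actions on realizations,'' and your write-up is a careful unpacking of precisely that sentence, verifying the four clauses of the $p$-height definition via the Lubin--Tate congruence and the invert-then-complete construction of $R$.
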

\begin{proof}
This follows from the definition of a Lubin--Tate coordinate system, along with the fact that completion and localization have the expected actions on realizations.
\end{proof}

As one would hope, the $n$-pipe ring $\pi_0 L_{K(h_n)} \ldots L_{K(h_1)} E_{h}$ does not depend upon the choice of coordinates.  Of course, this must be true if we expect to give a formal-geometric interpretation of its functor of points and if the $n$-pipe ring in question indeed comes from a homotopy-theoretic construction which \textit{a priori} has nothing to do with coordinates.

\begin{theorem}
The $n$-pipe ring $\pi_0 L_{K(h_n)} \cdots L_{K(h_1)} E_{h}$ is independent of the choice of Lubin--Tate coordinates $u_1, \ldots, u_{h-1}$.
\end{theorem}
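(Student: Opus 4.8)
The plan is to separate the genuinely coordinate-dependent parts of the construction from the canonical bookkeeping around them. The set-theoretic realization of $\pi_0 L_{K(h_n)}\cdots L_{K(h_1)}E_{h}$ is already manifestly coordinate-free: by Hovey's lemma and the preceding definition it is $\pi_0$ of a spectrum built with no reference to coordinates. What must be checked is that the pro-ind pipe structure is coordinate-free too. That structure is assembled from the $0$-pipe ring $\pi_0 E_{h}$ together with the operation ``invert $u_{h_j}$, then $I_{h_j}$-complete'' performed once for each $j$. The first ingredient is canonical on the nose --- its underlying ring is the Lubin--Tate deformation ring of $\Gamma$, and its $0$-pipe structure is induced by the (coordinate-free) maximal ideal --- so the theorem reduces to controlling the coordinate-dependence of the localize-then-complete steps.

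First I would record that the ideals $I_t=(p,u_1,\dots,u_{t-1})$ are independent of the Lubin--Tate coordinate system. Fixing a coordinatized versal deformation $\widetilde\Gamma$ with $p$-series $\sum a_i x^i$, the defining congruence $[p]_{\widetilde\Gamma}(x)\equiv u_t x^{p^t}\pmod{(p,u_1,\dots,u_{t-1},x^{1+p^t})}$ gives $a_{p^t}\equiv u_t\pmod{I_t}$; feeding this back in iteratively identifies $I_t$ with $(p,a_p,a_{p^2},\dots,a_{p^{t-1}})$, the ideal cutting out the locus of $LT$ on which the universal deformation has height $\ge t$. This is coordinate-free, and $\widetilde\Gamma$-independent by the remark following the definition of a Lubin--Tate coordinate system. (Alternatively, induct on $t$: $I_1=(p)$, and if $I_t$ is canonical then any two Lubin--Tate systems have $u_t\equiv a_{p^t}\equiv u_t'\pmod{I_t}$, whence $I_{t+1}=I_t+(u_t)=I_t+(u_t')=I_{t+1}'$.) The upshot I need is: any two Lubin--Tate coordinate systems satisfy $u_t-u_t'\in I_t$ for all $t$.

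The technical heart is the lemma that, for a pipe ring $R$, a closed ideal $I$, and elements $u,u'\in\real{R}$ with $u-u'\in I$, there is a canonical isomorphism $(u^{-1}R)^{\wedge}_I\cong((u')^{-1}R)^{\wedge}_I$. I would prove this one power of $I$ at a time: in the ring $R/I^k$ the ideal $I$ is nilpotent, so in $(u')^{-1}(R/I^k)$ --- where $u'$ is a unit --- the element $u=u'\bigl(1+(u')^{-1}(u-u')\bigr)$ is a unit as well, since $(u')^{-1}(u-u')$ is nilpotent; hence the localization map $R/I^k\to(u')^{-1}(R/I^k)$ inverts $u$ and factors through $u^{-1}(R/I^k)$, the symmetric construction gives a map back, and the two composites are the identity on $R/I^k$. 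This yields isomorphisms $u^{-1}(R/I^k)\cong(u')^{-1}(R/I^k)$ compatible in $k$; since $(u^{-1}R)^{\wedge}_I=\lim_k u^{-1}(R/I^k)$ --- localization commuting with the quotients $R\to R/I^k$, and $\Pro$ being sequentially complete --- passing to the limit gives the claim.

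Finally I would induct on the stage $n$, the base $\pi_0 E_{h}$ being handled above. Assuming $R_{n-1}:=\pi_0 L_{K(h_{n-1})}\cdots L_{K(h_1)}E_{h}$ has been identified canonically, via isomorphisms compatible with the structure map from $\pi_0 E_{h}$, the images in $R_{n-1}$ of two choices $u_{h_n}$ and $u_{h_n}'$ differ by an element of $I_{h_n}R_{n-1}$ (the image of $u_{h_n}-u_{h_n}'\in I_{h_n}$), and both $I_{h_n}R_{n-1}$ and the class of $u_{h_n}$ modulo it are canonical since $R_{n-1}$ and $I_{h_n}$ are. The lemma, applied with $R=R_{n-1}$ and $I=I_{h_n}$, then produces a canonical isomorphism $R_n\cong R_n'$ compatible with the structure maps, completing the induction. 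I expect the lemma to be the main obstacle --- in particular the point that modulo each $I^k$ the perturbation $u-u'$ is nilpotent, so that invertibility is unaffected --- after which threading the isomorphisms up the tower and checking their compatibility with the pro/ind presentation of $R_n$ is routine.
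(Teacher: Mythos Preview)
Your proof is correct and follows essentially the same strategy as the paper's: both proceed by induction on $n$, both hinge on the observation that two Lubin--Tate systems satisfy $u_{h_n}-u_{h_n}'\in I_{h_n}$, and both then deduce that inverting one coordinate and completing at $I_{h_n}$ forces the other coordinate to become invertible. The only tactical difference is that you work levelwise modulo $I_{h_n}^k$ and invoke nilpotence to get a finite geometric series, whereas the paper works directly in the completed ring $X_n^v$ and writes down the full geometric series $\sum_j (-1)^j c^j v_{h_n}^{-1-j}$; these are two phrasings of the same computation, and your isolation of the statement as a reusable lemma $(u^{-1}R)^\wedge_I\cong((u')^{-1}R)^\wedge_I$ is a clean way to package it.
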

\begin{proof}
Let $X_{n-1}^u$ denote the pipe ring $\pi_0 L_{K(h_{n-1})} \cdots L_{K(h_1)} E_{h}$ as presented with $u$-coordinates.  Suppose that $X_{n-1}^v$ is a second such ring, presented in $v$-coordinates, and suppose that the change of coordinates isomorphism $\phi_0: X_0^u \to X_0^v$ has been shown to extend uniquely to an isomorphism $\phi_{n-1}: X_{n-1}^u \to X_{n-1}^v$.  The remainder of the situation is then summarized in the following diagram:
\begin{center}
\begin{tikzcd}
X_{n-1}^u \arrow{r}{\phi_{n-1}} \arrow{d} \arrow{rdd} & X_{n-1}^v \arrow{d} \\
u_{h_n}^{-1} X_{n-1}^u \arrow{d} \arrow[dashed]{rd} & v_{h_n}^{-1} X_{n-1}^v \arrow{d} \\
X_n^u = (u_{h_n}^{-1} X_{n-1}^u)^\wedge_{I_{h_n}^u} \arrow[dashed]{r}{\phi_n} & (v_{h_n}^{-1} X_{n-1}^v)^\wedge_{I_{h_n}^v} = X_n^v.
\end{tikzcd}
\end{center}
We seek to construct the dashed arrows.  (As we will see, in general there is not a map $u_{h_n}^{-1} X_{n-1}^u \to v_{h_n}^{-1} X_{n-1}^v$.)  All of the vertical maps are injective on set-theoretic points, and so the question of the existence of the diagonal dashed arrow is determined by whether $u_{h_n}$ is sent to an invertible element in the target.  Since
\[\phi_{n-1}(u_{h_n}) = v_{h_n} + c\]
for some $c \in I^v_{h_n}$, then $\phi_{n-1}(h_n)$ has an inverse in $X^v_n$ given by
\[\phi_{n-1}(u_{h_n})^{-1} = \frac{1}{v_{h_n} + c} = \frac{v_{h_n}^{-1}}{1 + v_{h_n}^{-1}c} =  \sum_{j=0}^\infty (-1)^j c^j v_{h_n}^{-1-j}.\]
(Note that the existence of this infinite sum depends on the completeness of $X^v_n$ against the ideal $I^v_{h_n}$.)  This gives us the diagonal dashed arrow.  Finally, we obtain $\varphi_n$ from the fact that $\varphi_{n-1}(I^u_{h_n}) \subset I^v_{h_n}$.

Of course, switching the two pipe rings and applying this same method produces an inverse to $\phi_n$.
\end{proof}

\begin{remark}
These pipe rings $\pi_0 L_{K(h_n)} \cdots L_{K(h_1)} E_h$ come equipped with a the structure of a rigid pipe ring, by definition.  This proposition also serves to show that the rigid structure is independent of the choice of Lubin--Tate coordinate system.
\end{remark}

\section{A Lubin--Tate theorem}

We remind the reader that we have fixed the following data: a finite field $k$ of positive characteristic $p$; a finite, positive integer $h$; a formal group $\Gamma$ of finite $p$-height $h$ over $k$; and a weakly decreasing infinite sequence of nonnegative integers \[h = h_0 \ge h_1 \ge h_2 \ge \cdots \ge h_n \ge \cdots \ge 0.\]  Again, we will often refer to $h_0$ simply as $h$, as it plays a privileged role in the theory.

We now explore the moduli problem represented by the pipe spectrum \[\Spp \pi_0 L_{K(h_n)} \cdots L_{K(h_1)} E_{h}\] described in the previous section.  To do so, we need one further definition:
\begin{definition}
A \textit{staged deformation of $k$} is a staged ring $R_*$ where $R_0$ is a complete local ring with residue field $k$.  The category of staged deformations is denoted $\CatOf{StagedDef}$.\footnote{``Staged deformation'' is likely something of a misnomer, as we seem to be missing a condition on $R_n$ for $n > 0$ to link up with the word ``deformation''.  This won't affect our main theorem, though.} 
\end{definition}

\begin{definition}[Staged Lubin--Tate moduli problem]
Select a formal group law $H$ for the fixed formal group $\Gamma$.  Let $\M = \M_{H/k}$ be the moduli problem on $\CatOf{StagedDef}$ which assigns to a staged deformation \[R_0 \xrightarrow{i_1} R_1 \xrightarrow{i_2} \cdots \xrightarrow{i_n} R_n \xrightarrow{i_{n+1}} \cdots \] the following groupoid:
\begin{itemize}
\item The objects are given by commuting diagrams of the form
\begin{center}
\begin{tikzcd}
H \arrow{r} \arrow{d} & F_0 \arrow{d} & F_1 \arrow{l} \arrow{d} & \cdots \arrow{l} & F_n \arrow{l} \arrow{d} & \cdots \arrow{l} \\
\Spp k \arrow{r} & \Spp R_0 & \Spp R_1 \arrow{l} & \cdots \arrow{l} & \Spp R_n \arrow{l} & \cdots \arrow{l},
\end{tikzcd}
\end{center}
where the $F_n$ are coordinatized formal groups over $\Spp R_n$ of height $h_n$, each square is a pullback square, and the arrows in the top row are morphisms of coordinatized formal groups (i.e., the formal group law $F_n$ is specified by pushing forward $F_{n-1}$ along $i_n$).
\item Morphisms in the groupoid correspond to commuting diagrams of shape:
\begin{center}
\begin{tikzcd}[column sep=1.70em]
& H \arrow{ld} & & F_0 \arrow[leftarrow]{ll} \arrow[leftarrow]{rr} \arrow{ld} & & F_1 \arrow[leftarrow]{rr} \arrow{ld} & & \cdots \\
\Spp k & & \Spp R_0 \arrow[leftarrow]{ll} \arrow[leftarrow]{rr} & & \Spp R_1 \arrow[leftarrow]{rr} & & \cdots \\
& H \arrow{lu} \arrow[double, -, crossing over]{uu} & & F'_0 \arrow[leftarrow]{ll} \arrow[leftarrow]{rr} \arrow{lu} \arrow[leftarrow, crossing over]{uu} & & F'_1 \arrow[leftarrow]{rr} \arrow{lu} \arrow[leftarrow, crossing over]{uu} & & \cdots
\end{tikzcd}
\end{center}
where first vertical arrow $H \to H$ is an equality and the other vertical arrows are formal group isomorphisms (which need not respect the chosen coordinatizations).  Note that the pullback condition for the object diagrams implies that the isomorphism $F_0 \to F'_0$ determines the isomorphisms $F_n \to F'_n$ for all $n \ge 1$.
\end{itemize}
\end{definition}

\begin{theorem}\label{thm:LubinTatePair}
Restricted to fine staged deformations (i.e., staged deformations of fine pipe rings), the moduli problem $\M$ is essentially discrete and is corepresented by the bifine staged deformation \[\pi_0 E_{h} \to \pi_0 L_{K(h_1)} E_{h} \to \cdots \to \pi_0 L_{K(h_n)} \cdots L_{K(h_1)} E_{h} \to \cdots.\]
\end{theorem}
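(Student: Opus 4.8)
\emph{Strategy.}
The plan is to prove the theorem by induction on stages: the stage-zero statement is the classical Lubin--Tate theorem~\cite{LubinTate}, and the inductive step is carried out using the explicit ``invert, then complete'' presentation of the rings $\pi_0 L_{K(h_n)}\cdots L_{K(h_1)}E_h$. First I would reduce away the groupoid structure. Because each $F_n$ is by definition the pushforward of $F_{n-1}$ along $i_n\colon R_{n-1}\to R_n$, an object of $\M(R_*)$ is completely determined by the coordinatized formal group law $F_0$ over $R_0$ --- which, as $R_0$ is a complete local ring with residue field $k$, is precisely a deformation of $\Gamma$ (via $H$) in the classical sense --- subject to the conditions that each pushforward $F_n$ to $R_n$ be of $p$-height $h_n$. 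Likewise, by the pullback hypothesis in the definition, a morphism of $\M(R_*)$ is exactly a formal group isomorphism $F_0\to F_0'$ over $R_0$ restricting to the identity on $H$, i.e.\ a $\star$-isomorphism; since such deformations are classically rigid, $\M(R_*)$ is essentially discrete. It remains to construct, naturally in the fine staged deformation $R_*$, a bijection between $\pi_0\M(R_*)$ and $\Hom_{\CatOf{StagedRings}}\bigl(\pi_0 E_h\to\pi_0 L_{K(h_1)}E_h\to\cdots,\ R_*\bigr)$, which I would build one stage at a time.

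\emph{The stagewise matching.}
Suppose inductively that the data through stage $m-1$ has been identified with staged ring maps through stage $m-1$; fix such data, encoded by a ring map $f_{m-1}\colon X_{m-1}\to R_{m-1}$ with $X_{m-1}=\pi_0 L_{K(h_{m-1})}\cdots E_h$, and let $F_{m-1}$ be the corresponding formal group law over $R_{m-1}$. Extending to stage $m$ on the moduli side means imposing the single condition that $F_m=(i_m)_\ast F_{m-1}$ be of $p$-height $h_m$ over $R_m$; extending on the ring side means producing a map $f_m\colon X_m\to R_m$, $X_m=(u_{h_m}^{-1}X_{m-1})^\wedge_{I_{h_m}}$, compatible with $f_{m-1}$ and the structure maps. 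I claim these are equivalent, and that when they hold the extension $f_m$ is unique. Given the height-$h_m$ datum --- a closed ideal $J\subset R_m$ with $R_m$ complete against $J$, with $p,a_2,\ldots,a_{p^{h_m}-1}\in J$ and $a_{p^{h_m}}$ a unit in $\real{R_m/J}$ --- unwinding the defining congruences of a Lubin--Tate coordinate system inductively shows that the image of $I_{h_m}$ under $X_{m-1}\xrightarrow{f_{m-1}}R_{m-1}\to R_m$ lies in $J$ and that the image of $u_{h_m}$ is a unit modulo $J$, hence, by completeness of $R_m$ against $J$, a unit in $R_m$. So the composite factors through $u_{h_m}^{-1}X_{m-1}$, and since it carries $I_{h_m}$ into $J$ and $R_m=\lim_k R_m/J^k$, it factors through the $I_{h_m}$-completion $X_m$, giving $f_m$. (This is the same manoeuvre, geometric-series inversion of $u_{h_m}$ included, used to prove the Lubin--Tate pipe rings coordinate-independent.) Conversely, given $f_m$, the completeness of $X_m$ against $I_{h_m}$ together with the fact (established above for the Lubin--Tate pipe rings) that the universal formal group law on $X_m$ has $p$-height $h_m$ transport along $f_m$ --- realization commuting with the relevant localizations and completions, which is where fineness of $R_m$ enters --- to exhibit $F_m$ with height $h_m$. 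Uniqueness of $f_m$: its restriction along $X_{m-1}\to X_m$ is forced, $u_{h_m}$ is forced to a unit, and since $\real{X_{m-1}}\to\real{X_m}$ has dense image, $R_m$ is complete, and realization is faithful on maps out of the bifine ring $X_m$ into the fine ring $R_m$, the extension is determined.

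\emph{Assembly.}
Starting from the classical bijection at stage zero and iterating the matching above up the tower, one obtains the desired natural bijection $\pi_0\M(R_*)\cong\Hom_{\CatOf{StagedRings}}(\pi_0 E_h\to\cdots,\ R_*)$, with mutual inverseness guaranteed by the classical statement at stage zero and the uniqueness of each $f_m$. Since each $\pi_0 L_{K(h_n)}\cdots E_h$ is bifine, the corepresenting staged deformation is bifine.

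\emph{Main obstacle.}
The crux is the stagewise matching, and within it the two places where the pipe-ring topology does genuine work: first, checking that a height-$h_m$ datum on $F_m$ really forces $u_{h_m}$ to become invertible in $R_m$ itself (not merely modulo $J$) and that one can then factor through the pro-limit defining the completion $X_m$; and second, the reverse implication, that a ring map $f_m$ makes $R_m$ complete against the image of $I_{h_m}$, so that $F_m$ has a valid height witness. Both hinge on the hypothesis that $R_*$ be \emph{fine}: fineness keeps localization and completion compatible with realization (cf.\ the proposition that localization preserves fineness, and that a sequential limit of pipe rings preserves it), and, together with the faithfulness of realization on maps out of bifine pipe rings, it excludes the extra maps and automorphisms that a pathological $R_m$ would otherwise contribute to the moduli problem. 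Everything else is either classical Lubin--Tate theory or bookkeeping within the established pro-ind-pro formalism.
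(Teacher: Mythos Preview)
Your overall strategy coincides with the paper's: reduce to $\pi_0\M$ via classical Lubin--Tate rigidity, then induct on stages, using the explicit invert-then-complete presentation of $X_m = (u_{h_m}^{-1}X_{m-1})^\wedge_{I_{h_m}}$ to match ``height $h_m$ at stage $m$'' with ``$f_{m-1}$ extends uniquely to $f_m$.'' That is exactly the skeleton of the paper's proof.

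There is, however, a genuine gap in your inductive step. You write ``let $F_{m-1}$ be the corresponding formal group law over $R_{m-1}$,'' meaning $F_{m-1} = f_{m-1}^*\G_{m-1}$, and then impose the height-$h_m$ condition on \emph{its} pushforward $F_m = i_m^* f_{m-1}^*\G_{m-1}$. Your ``unwinding the defining congruences of a Lubin--Tate coordinate system'' then correctly relates the images of the $u_t$ to the $p$-series coefficients of \emph{this particular} $F_m$. But the moduli problem hands you the height-$h_m$ witness for the \emph{given} representative of the isomorphism class in $\pi_0\M(R_*)$, which is only $\star$-isomorphic to $f_{m-1}^*\G_{m-1}$ via $\phi_{m-1}$. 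Nothing in your argument transports the height witness across that isomorphism, and this is not automatic: the condition ``$a_i\in J$ for $i<p^{h_m}$ and $a_{p^{h_m}}$ a unit mod $J$'' is stated in terms of a specific coordinatization. The paper isolates precisely this point as Lemma~\ref{thm:FGLIsoPreservesCompleteness}: if $G'\xrightarrow{\phi} G$ is an isomorphism of formal group laws over a ring complete against $\a$, and the $p$-series of $G$ has property $(\ast)$, then so does that of $G'$. The proof is a direct coefficient computation with $\phi^{\circ(-1)}\circ[p]_G\circ\phi$, and the paper invokes it exactly where you appeal to the congruences. Without it, surjectivity of your bijection is unproved: given an arbitrary $[F_*]\in\pi_0\M(R_*)$, you cannot conclude that the images of $I_{h_m}$ and $u_{h_m}$ under $i_m\circ f_{m-1}$ behave as needed, because those images are governed by the $p$-series of $i_m^* f_{m-1}^*\G_{m-1}$, not of the given $F_m$. (A minor secondary point: your deployment of fineness is vaguer than the paper's. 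The paper uses it only for faithfulness of realization, hence for essential discreteness; it does not assert that fineness makes realization commute with completion in the way your converse direction suggests.)
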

\begin{proof}
From here on, we refer to the staged ring in the theorem statement as $X_0 \to X_1 \to X_2 \to \cdots$ and the formal group over $X_n$ simply as \[\G_n = \G_{L_{K(h_n)} \cdots L_{K(h_1)} E_{h}}.\]  The classical Lubin--Tate moduli problem is essentially discrete~\cite[Theorem 4.4]{Rezk}, and since we have restricted to fine rings and the first vertical arrow in the diagram above determines the latter ones by pullback, it is true for $\M$ as well.\footnote{More concisely, $\M(R_0 \to R_1 \to \cdots)$ is a subgroupoid of the classical Lubin--Tate moduli functor evaluated on $R_0$ alone.}  This means that $\M$ is valued in discrete groupoids, and so it only remains to construct the left-hand natural isomorphism in \[\CatOf{StagedDef}\left(\begin{array}{c} X_0 \\ \downarrow \\ X_1 \\ \downarrow \\ \vdots \end{array}, -\right) \xrightarrow{\cong} \pi_0 \M \xleftarrow{\simeq} \M.\]

To accomplish this, we must produce for a point $(F_0, F_1, \ldots) \in \M(R_0 \xrightarrow{i_1} R_1 \xrightarrow{i_2} \cdots)$ the data of morphisms of $n$-pipe rings $f_n: X_n \to R_n$, along with commuting isomorphisms of formal groups $\phi_n: F_n \to f_n^* \G_n$ over $\Spp R_n$.  Altogether, this data fits into the following diagram:
\begin{center}
\begin{tikzcd}[column sep=0.9em]
& H \arrow{ldd} & & F_0 \arrow[leftarrow]{ll} \arrow[leftarrow]{rr} \arrow{ldd} & & \cdots \arrow[leftarrow]{rr} & & F_{n-1} \arrow[leftarrow]{rr} \arrow{ldd} & & F_n \arrow[leftarrow]{rr} \arrow{ldd} & & \cdots \\
\\
\Spp k & & \Spp R_0 \arrow[leftarrow]{ll} \arrow[leftarrow]{rr} \arrow{dd} & & \cdots \arrow[leftarrow]{rr} & & \Spp R_{n-1} \arrow[leftarrow]{rr} \arrow{dd} & & \Spp R_n \arrow[leftarrow]{rr} \arrow{dd} & & \cdots \\
& H \arrow{lu} \arrow[double, -, crossing over]{uuu} & & f_0^* \G_0 \arrow[crossing over,leftarrow]{ll} \arrow[crossing over,leftarrow]{rr} \arrow{lu} \arrow[leftarrow, crossing over]{uuu}[description]{\phi_0} & & \cdots \arrow[crossing over,leftarrow]{rr} & & f_{n-1}^* \G_{n-1} \arrow[leftarrow,crossing over]{rr} \arrow{lu} \arrow[leftarrow, crossing over]{uuu}[description]{\phi_{n-1}} & & f_n^* \G_n \arrow[leftarrow]{rr} \arrow{lu} \arrow[leftarrow, crossing over]{uuu}[description]{\phi_n} & & \cdots \\
\Spp k \arrow[double,-]{uu} & & \Spp X_0 \arrow[leftarrow]{ll} \arrow[leftarrow]{rr} & & \cdots \arrow[leftarrow]{rr} & & \Spp X_{n-1} \arrow[leftarrow]{rr} & & \Spp X_n \arrow[leftarrow]{rr} & & \cdots \\
& H \arrow{ul} \arrow[crossing over, double, -]{uu} & & \G_0 \arrow[leftarrow]{ll} \arrow[leftarrow]{rr} \arrow{ul} \arrow[crossing over, leftarrow]{uu} & & \cdots \arrow[leftarrow]{rr} & & \G_{n-1} \arrow[leftarrow]{rr} \arrow{ul} \arrow[crossing over, leftarrow]{uu} & & \G_n \arrow[leftarrow]{rr} \arrow{ul} \arrow[crossing over, leftarrow]{uu} & & \cdots.
\end{tikzcd}
\end{center}

The morphisms $f_0$ and $\phi_0$ are handled automatically by classical Lubin--Tate theory~\cite[Proposition 1.1]{LubinTate}, so our task is to inductively construct the rest.  To extend the map $f_{n-1}$ to a map $f_n$, we need the following result, proven below as Lemma~\ref{thm:FGLIsoPreservesCompleteness}: if the first $(h_n-1)$ coefficients of the $p$-series of a formal group law are in an ideal against which the pipe ring is complete and the $h_n$th coefficient is invertible, then the same is true for any isomorphic formal group law.

Postponing that proof for now, let us apply this result to our situation.  The isomorphism $\phi_{n-1}$ pulls back along $i_n$ to give an isomorphism \[i_n^* F_{n-1} \to i_n^* f_{n-1}^* \G_{n-1}.\]  This isomorphism will become $\phi_n$ once we identify the source and target with the expected values, and so we refer to it as $\phi_n$ below.  By construction, the source can be identified with $F_n$, and because we are working in a Lubin--Tate coordinate system its associated $p$-series can be written as \[[p]_{F_n}(x) = px + \sum_{i=2}^\infty a_i x^i,\] with $R_n$ complete against the coefficients $p$ and $a_i$ for $i < p^{h_n}$ and with $a_{p^{h_n}}$ a unit.  Define $\a_n$ to be the ideal generated by $a_i$ for $i < p^{h_n}$.  Applying Lemma~\ref{thm:FGLIsoPreservesCompleteness} to the isomorphism $\phi_n$, we have that the coefficients of the $p$-series of $i_n^* f_{n-1}^* \G_{n-1}$ lie in $\a_n$ up through order $p^{h_n} - 1$ and that the coefficient of $x^{p^{h_n}}$ is invertible.  Hence, the composite $i_n \circ f_{n-1}$ extends uniquely along both the localization $X_{n-1} \to u_{h_n}^{-1} X_{n-1}$ and the completion $u_{h_n}^{-1} X_{n-1} \to (u_{h_n}^{-1} X_{n-1})^\wedge_{I_{h_n}} = X_n$, using the definitions of the localization and completion as ind- and pro-objects.  This gives the desired map $f_n$.
\end{proof}

\begin{lemma}\label{thm:FGLIsoPreservesCompleteness}
Let $G$ and $G'$ be formal group laws over a ring $S$, and let $\phi: G' \to G$ be an isomorphism.  Write the $p$-series of $G$ and $G'$ as
\begin{align*}
[p]_G(x) & = px + \sum_{i=2}^\infty a_i x^i, \\
[p]_{G'}(x) & = px + \sum_{i=2}^\infty a'_i x^i.
\end{align*}
Suppose that $S$ is complete with respect to some ideal $\a$.  If $G$ has the property that $a_i \in \a$ for $i < n$ and $a_n \in R^\times$, called (*), then $G'$ has property (*) for the $a'_i$.\footnote{Lemma~\ref{thm:FGLIsoPreservesCompleteness} is of course not actually a statement about formal group laws and their $p$-series.  It is equally true when $\phi$ is an invertible power series intertwining two series $F$ and $F'$ by the formula $F'(x) = \phi^{\circ(-1)}(F(\phi(x)))$.  We state it in the specific language of formal group laws we will need, for ease of reference.}
\end{lemma}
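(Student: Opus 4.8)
The plan is to reduce the assertion to a short power-series computation modulo $\a$. First I would recall that an isomorphism $\phi\colon G' \to G$ intertwines $p$-series, so that $[p]_{G'}(x) = \phi^{-1}\big([p]_G(\phi(x))\big)$, and that $\phi(x) = c_1 x + (\text{higher order})$ with $c_1 \in S^\times$ — it has a compositional inverse over $S$, which forces $c_1$ to be a unit. The case $n = 1$ is vacuous: property (*) then only asserts that the linear coefficient of $[p]_G$ is a unit, and the linear coefficient of $[p]_{G'}$ is likewise $p$. So I would assume $n \ge 2$; in particular property (*) for $G$ then includes the assertion $p \in \a$ (reading $a_1 = p$), which is essential below.

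Next I would use completeness to rephrase everything over $\bar S := S/\a$. Since $S$ is $\a$-adically complete, every element of $1 + \a$ is invertible (its inverse is the $\a$-adically convergent geometric series), so $\a \subseteq \operatorname{Jac}(S)$; hence an element of $S$ lies in $\a$ exactly when it vanishes in $\bar S$, and is a unit exactly when its image in $\bar S$ is. It therefore suffices to prove the reduction of (*): writing $\bar G$, $\bar G'$, $\bar\phi$ for the images of the data over $\bar S$, I must show that $[p]_{\bar G'}(x) = \bar a_1' x + \sum_{i\ge 2}\bar a_i' x^i$ has $\bar a_i' = 0$ for $i < n$ and $\bar a_n' \in \bar S^\times$.

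The computation would then go as follows. Reduction $S\powser{x}\to\bar S\powser{x}$ is a ring homomorphism compatible with substitution, so $[p]_{\bar G'} = \bar\phi^{-1}\circ [p]_{\bar G}\circ \bar\phi$. Property (*) for $G$ gives $\bar a_i = 0$ for all $i < n$, whence $[p]_{\bar G}(y)\equiv \bar a_n y^{\,n}\pmod{y^{n+1}}$. Substituting $\bar\phi(x) = \bar c_1 x + \cdots$ gives $[p]_{\bar G}(\bar\phi(x))\equiv \bar a_n \bar c_1^{\,n}x^{\,n}\pmod{x^{n+1}}$, a series with vanishing constant and linear terms; applying $\bar\phi^{-1}(y) = \bar c_1^{-1}y + \cdots$, whose quadratic and higher terms contribute only $O(x^{2n})\subseteq O(x^{n+1})$ since $n\ge 2$, yields
\[ [p]_{\bar G'}(x)\equiv \bar a_n\,\bar c_1^{\,n-1}\,x^{\,n}\pmod{x^{n+1}}. \]
Hence $\bar a_i' = 0$ for $i < n$, while $\bar a_n' = \bar a_n\bar c_1^{\,n-1}$ is a product of units; by the previous paragraph this lifts to $a_i'\in\a$ for $i < n$ and $a_n'\in S^\times$, which is property (*) for $G'$.

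The calculation itself is routine, so the work is really in the two structural inputs, which are also where I expect the only obstacles. The first is that conjugation by a general invertible power series does \emph{not} preserve the condition ``all coefficients in degrees $< n$ lie in $\a$'' unless the degree-one coefficient $p$ is already in $\a$ — this is why property (*) must be understood to constrain $a_1 = p$, and why in the application the relevant completion ideal contains $p$. The second is the passage from ``unit modulo $\a$'' back to ``unit in $S$'', which is exactly where completeness of $S$ is used.
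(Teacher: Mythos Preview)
Your proof is correct and takes essentially the same approach as the paper: both compute $[p]_{G'} = \phi^{-1}\circ[p]_G\circ\phi$ and verify that composition with an invertible power series preserves property~(*), invoking completeness precisely to pass from ``unit modulo $\a$'' to ``unit in $S$''. Your reduction modulo $\a$ before computing is a mild streamlining of the paper's explicit coefficient-by-coefficient tracking of ideal membership through the double sums, but the mathematical content is the same.
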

\begin{proof}
Write $\phi(x) = \sum_{j=1}^\infty b_j x^j$ with $b_1 \in R^\times$.   We would like to prove that property (*) holds for the series \[[p]_{G'}(x) = \phi^{\circ(-1)}([p]_G(\phi(x))).\]  We begin by proving that property (*) holds for the argument $[p]_G(\phi(x))$, which we expand as
\begin{align*}
[p]_G(\phi(x)) & = \sum_{i=1}^\infty a_i \cdot \phi(x)^i \\
& = \sum_{i=1}^\infty a_i \cdot \left(\sum_{j=1}^\infty b_j x^j\right)^i \\
& = \sum_{i=1}^\infty \sum_{j_1, \ldots, j_i > 0} a_i b_{j_1} \cdots b_{j_i} x^{j_1+\cdots + j_i} \\
& = \sum_{m=1}^\infty \beta_m x^m.
\end{align*}
First, it is clear that $\beta_m \in \a$ for $m<n$, since the coefficients in the penultimate line that sum to give $\beta_m$ all contain some factor $a_i$ with $i<n$, which by assumption lives in $\a$.  Furthermore, among the coefficients summing to give $\beta_n$, we have $a_n b_1^n \in R^\times$ when $i=n$ and $b_{j_1}=\ldots = b_{j_i}=1$, and then all the rest contain some factor $a_i$ with $i<n$ and hence live in $\a$.  Thus $\beta_n \in R^\times$, as the sum of a unit and an element of $\a$ is again a unit since $S$ is complete with respect to $\a$.  So the argument $[p]_G(\phi(x))$ does indeed satisfy property (*).

Let us write $\phi^{\circ (-1)} = \sum_{j=1}^\infty c_j x^j$, which has $c_1 = b_1^{-1} \in R^\times$.  We can now see that
\[ [p]_{G'}(x) = \phi^{\circ(-1)}\left( [p]_G(\phi(x)) \right) = \sum_{j=1}^\infty c_j \left( \sum_{m=1}^\infty \beta_mx^m \right)^j = \sum_{j=1}^\infty \sum_{m_1,\ldots,m_j>0} c_j \beta_{m_1} \cdots \beta_{m_j} x^{m_1+\cdots + m_j}
\]
satisfies property (*) by an identical argument to the one above.
\end{proof}

\begin{corollary}\label{cor:HeightIsIsoStable}
The $p$-height is independent of the chosen coordinate on a given formal group. \qed
\end{corollary}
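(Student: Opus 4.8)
The plan is to deduce this directly from Lemma~\ref{thm:FGLIsoPreservesCompleteness}. A formal group over a pipe ring $R$ is a group-object structure on a formal variety $V \cong \A^1_R$, and a choice of coordinate is an isomorphism $c\colon V \xrightarrow{\sim} \A^1_R$. Given two coordinates $c$ and $c'$, the composite $c'\circ c^{-1}$ is an automorphism of $\A^1_R = \Spp R\powser{x}$ over $R$, hence is represented by an invertible power series $\phi(x) = \sum_{j\ge 1} b_j x^j \in \real R\powser{x}$ with $b_1 \in \real R^\times$. This $\phi$ intertwines the coordinatized formal group laws $F := F_c$ and $F' := F_{c'}$, so that their $p$-series are related by $[p]_{F'}(x) = \phi^{\circ(-1)}([p]_F(\phi(x)))$ --- possibly after interchanging $\phi$ and $\phi^{\circ(-1)}$, which does not affect what follows.

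Now suppose $F$ has $p$-height $h$, witnessed by a closed ideal $I$: thus $R$ is complete with respect to $I$, the element $p$ and the coefficients $a_i$ of $[p]_F$ with $i < p^h$ all lie in $\real I$, and $a_{p^h}$ is invertible in $\real{R/I}$. I would pass to the special fiber $S := \real{R/I}$; since $p \in \real I$, this is an $\F_p$-algebra, and it is trivially complete with respect to the zero ideal $\a = 0$. Over $S$ the reduced $p$-series of $F$ is $\sum_{i \ge p^h} \bar a_i x^i$ with $\bar a_{p^h} \in S^\times$, which is exactly property (*) of Lemma~\ref{thm:FGLIsoPreservesCompleteness} for $n = p^h$ and $\a = 0$, and the reduced change-of-coordinates series $\bar\phi \in S\powser{x}$ still has invertible linear term. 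Applying the lemma to $\bar\phi$ over $S$, we conclude that the reduced $p$-series of $F'$ also satisfies property (*): the coefficients $a'_i$ of $[p]_{F'}$ vanish in $S$ for $i < p^h$ --- equivalently, $a'_i \in \real I$ --- while $a'_{p^h}$ is invertible in $\real{R/I}$. Together with $p \in \real I$ and the completeness of $R$ against $I$, this is precisely the assertion that $F'$ has $p$-height $h$, witnessed by the same ideal $I$. Running the same argument with $F$ and $F'$ interchanged gives the converse, so the $p$-height does not depend on the coordinate.

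The only point requiring care is the bookkeeping between the definition of $p$-height --- which asks for invertibility of the leading coefficient in the quotient $R/I$ rather than in $R$ itself --- and the hypotheses of Lemma~\ref{thm:FGLIsoPreservesCompleteness}, which are phrased over a single ring complete against an ideal. Reducing to the special fiber $S = R/I$ with $\a = 0$ makes this mismatch disappear; alternatively one could stay over $\real R$ with $\a = \real I$ after observing that over a ring complete with respect to $I$ an element invertible modulo $I$ is genuinely invertible (multiply by an approximate inverse and sum the resulting geometric series), but the passage to the special fiber is cleaner and makes visible that $p$-height is really a property of the reduced formal group. Everything else is formal: the constructions involved are functorial in the ring, and Lemma~\ref{thm:FGLIsoPreservesCompleteness} does all of the real work.
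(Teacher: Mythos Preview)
Your proof is correct and follows the paper's intended approach: the corollary is stated there with a bare \qed, meaning it is to be read off directly from Lemma~\ref{thm:FGLIsoPreservesCompleteness}. Your careful handling of the mismatch between invertibility in $\real{R/I}$ (as in the definition of $p$-height) and invertibility in the ambient ring (as in the hypothesis of the lemma)---resolved either by passing to the special fiber with $\a=0$ or by the geometric-series argument over the $I$-complete ring---makes explicit a small bookkeeping point the paper leaves implicit.
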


\section{Final remarks}


\begin{remark}
The language of $p$-divisible groups may also give a natural setting for the hypotheses of Theorem~\ref{thm:LubinTatePair}.  The relevant feature of a $p$-divisible group is that it comes with two invariants: its height, which is stable under pullback along \emph{any} morphism of schemes, and its formal height, which behaves as height for formal groups --- it is stable only for pullbacks along continuous maps in the relevant adic topology and can decrease for a general map.  The manifestation of $p$-divisible groups in algebraic topology has been partially explored in the third author's thesis~\cite{StapletonCharacters}.  The following algebro-topological formulas are shown there to produce relevant $p$-divisible groups with differing height and formal height:
\begin{align*}
\G_{E_{h}}[p^\infty] & = \colim_j \Spf_{I_{h}} \pi_0 \left[ E_{h}^{B\Z/p^j} \right], \\
i_1^*(\G_{E_{h}}[p^\infty]) & = \colim_j \Spf_{I_{h_1}} \pi_0 \left[ L_{K(h_1)} \left(E_{h}^{B\Z/p^j}\right) \right], \\
(i_1^* \G_{E_{h}})[p^\infty] & = \left(i_1^*(\G_{E_{h}}[p^\infty])\right)^\circ = \colim_j \Spf_{I_{h_1}} \pi_0 \left[ (L_{K(h_1)} E_{h})^{B\Z/p^j} \right].
\end{align*}
\end{remark}

\begin{remark}
The Landweber exact functor theorem lifts our purely algebraic theorem to a statement about certain (homotopy) $BP$-algebra spectra.  An even $BP_*$-algebra $R$ is said to be \textit{Landweber flat} when the sequence $(p, v_1, \ldots)$ is regular on $R$, i.e., when the maps \[R / (p, \ldots, v_{n-1}) \xrightarrow{\cdot v_n} R/(p, \ldots, v_{n-1})\] are injective for all $n \ge 0$.  This is true not just of $\pi_0 E_{h}$ but also of $\pi_0 L_{K(h_n)} \cdots L_{K(h_1)} E_{h}$, and so Landweber's theorem along with a result of Hovey and Strickland~\cite[Proposition 2.20]{HoveyStrickland} yields a staged $BP$-algebra $E_h \to L_{K(h_1)} E_{h} \to \cdots$.  Hence, when the formal group laws associated to a point in $\M(R_*)$ imbue each realized pipe ring $\real{R_n}$ with the structure of a Landweber flat $BP_*$-algebra, we get a unique morphism of sequences of the corresponding $BP$-algebras.
\end{remark}

\begin{remark}[\textit{Ceci n'est pas une pipe}]
Our original formulation of this moduli problem used a different source category.  Let $\CatOf{StagedDef}'$ be defined as the category of squares of functors
\begin{center}
\begin{tikzcd}
\N^{\mathrm{discrete}} \arrow{r} \arrow{d} & \N \arrow{d} \\
\CatOf{CompleteTopologicalRings} \arrow{r} & \CatOf{Rings},
\end{tikzcd}
\end{center}
with morphisms commuting natural transformations.  Writing $R'$ for the functor $R': \N \to \CatOf{Rings}$, this is to say: the morphism of rings $R'_n \to R'_{n+1}$ \emph{within} a sequence need not be continuous, but we require that a morphism $R'_n \to S'_n$ across sequences be continuous.  Using this category, we also define a second moduli problem $\M'$, identical in definition to $\M$ except that it uses $\CatOf{StagedDef}'$ as a source.  Now suppose that we have a point $(F'_0, F'_1, \ldots) \in \M'(R'_0 \to R'_1 \to \cdots)$.  Then we can also given a Lubin--Tate type theorem for $\M'$: there is again a unique sequence of compatible maps \[f'_n: \pi_0 L_{K(h_n)} \cdots L_{K(h_1)} E_h \to R'_n,\] continuous in the $I'_{h_n}$-adic topology, expressing $F'_n$ as the pullback $(f'_n)^* \G_n$.

However, this problem ``factors'' through the moduli $\M$ considered here.  Recall that there is a set-theoretic realization morphism $\CatOf{StagedDef} \to \CatOf{StagedDef}'$.  One can construct a staged pipe ring deformation $R_* \in \CatOf{StagedDef}$ which begins with $R_0 = R'_0$ and which comes equipped with a map ${\real R}_* \to R'_*$ in $\CatOf{StagedDef}'$ factoring the Lubin--Tate map $f'_*$.  The system $R_*$ is formed by iterated pushouts:
\begin{center}
\begin{tikzcd}[column sep=.2em]
& R'_0 \arrow{rr} & & R'_1 \arrow{rr} & & R'_2 \arrow{rr} & & \cdots \\
R_0 \arrow{rr} \arrow{ru} \arrow[leftarrow]{rd} & & \pi_0 L_{K(h_1)} E_h \underset{\pi_0 E_h}{\hat\otimes} R_0 \arrow{rr} \arrow{ru} \arrow[leftarrow]{rd} & & \pi_0 L_{K(h_2)} L_{K(h_1)} E_h \underset{\pi_0 E_h}{\hat\otimes} R_0 \arrow{ru} \arrow[leftarrow]{rd} \arrow{rr} & & \cdots \\
& \pi_0 E_h \arrow{rr} \arrow[crossing over]{uu} & & \pi_0 L_{K(h_1)} E_h \arrow{rr} \arrow[crossing over]{uu} & & \pi_0 L_{K(h_2)} L_{K(h_1)} E_h \arrow{rr} \arrow[crossing over]{uu} & & \cdots
\end{tikzcd}
\end{center}

It is also worth pointing out that the moduli problem $\M'$ is often empty: since there is no topological restriction on the maps $R'_n \to R'_{n+1}$, there is no reason to think that an ideal of definition of $R'_{n+1}$ will preimage to something contained in the ideal of definition of $R'_n$.  In this situation, no formal group law can have the height property demanded by $\M'$.
\end{remark}

\begin{remark}
One obvious place where the iterated localization $L_{K(h_n)} \cdots L_{K(h_1)} E_h$ appears is in the chromatic fracture cube for $E_h$.  Unfortunately, since $E_h$ is $K(h)$-local, our study of the iterated localizations has little new to add to the discussion.  For example, when $h = 2$, the chromatic fracture cube is the limit diagram
\begin{center}
\begin{tikzcd}[row sep=1em]
& L_2 E_2 \arrow{rr} \arrow{ld} \arrow{dd} & & L_{K(1)} E_2 \arrow{ld} \arrow{dd} \\
L_{K(2)} E_2 \arrow[crossing over]{rr} \arrow{dd} & & L_{K(1)} L_{K(2)} E_2 \\
& L_{K(0)} E_2 \arrow{rr} \arrow{ld} & & L_{K(0)} L_{K(1)} E_2 \arrow{ld} \\
L_{K(0)} L_{K(2)} E_2 \arrow{rr} & & L_{K(0)} L_{K(1)} L_{K(2)} E_2. \arrow[crossing over,leftarrow]{uu}
\end{tikzcd}
\end{center}
Since $E_2 = L_2 E_2 = L_{K(2)} E_2$, each back-to-front morphism is already an equivalence, meaning that the pullback condition on the fracture cube is vacuously satisfied.  Using a fracture cube to compute $L_t E_n$ for $t < n$ is also a dead end; consider for example the limit square
\begin{center}
\begin{tikzcd}
L_1 E_2 \arrow{r} \arrow{d} & L_{K(1)} E_2 \arrow{d} \\
L_{K(0)} E_2 \arrow{r} & L_{K(0)} L_{K(1)} E_2.
\end{tikzcd}
\end{center}
Since we know that all of these besides $L_1E_2$ are even-concentrated, we obtain an exact sequence
\[
0 \to \pi_* L_1 E_2 \to p^{-1}(\Z_p\powser{u_1}) \oplus (\Z_p\(u_1\))^\wedge_p \to p^{-1}\left(\Z_p\(u_1\))^\wedge_p\right) \to \pi_{*-1} L_1 E_2 \to 0.
\]
From this we can see that $L_1 E_2$ has loads of odd-dimensional homotopy, which prevents the direct application of formal geometry as we understand it.
\end{remark}

\bibliographystyle{plain}
\bibliography{lubintate}

\begin{thebibliography}{10}

\bibitem{AFG}
Matthew Ando, Christopher~P. French, and Nora Ganter.
\newblock The {J}acobi orientation and the two-variable elliptic genus.
\newblock {\em Algebr. Geom. Topol.}, 8(1):493--539, 2008.

\bibitem{AndoMorava}
Matthew Ando and Jack Morava.
\newblock A renormalized {R}iemann-{R}och formula and the {T}hom isomorphism
  for the free loop space.
\newblock In {\em Topology, geometry, and algebra: interactions and new
  directions ({S}tanford, {CA}, 1999)}, volume 279 of {\em Contemp. Math.},
  pages 11--36. Amer. Math. Soc., Providence, RI, 2001.

\bibitem{AMS}
Matthew Ando, Jack Morava, and Hal Sadofsky.
\newblock Completions of {$\bold Z/(p)$}-{T}ate cohomology of periodic spectra.
\newblock {\em Geom. Topol.}, 2:145--174 (electronic), 1998.

\bibitem{GreenleesStrickland}
J.~P.~C. Greenlees and N.~P. Strickland.
\newblock Varieties and local cohomology for chromatic group cohomology rings.
\newblock {\em Topology}, 38(5):1093--1139, 1999.

\bibitem{Hovey}
Mark Hovey.
\newblock Bousfield localization functors and {H}opkins' chromatic splitting
  conjecture.
\newblock In {\em The \v {C}ech centennial ({B}oston, {MA}, 1993)}, volume 181
  of {\em Contemp. Math.}, pages 225--250. Amer. Math. Soc., Providence, RI,
  1995.

\bibitem{HoveyStrickland}
Mark Hovey and Neil~P. Strickland.
\newblock Morava {$K$}-theories and localisation.
\newblock {\em Mem. Amer. Math. Soc.}, 139(666):viii+100, 1999.

\bibitem{Isaksen}
Daniel~C. Isaksen.
\newblock Calculating limits and colimits in pro-categories.
\newblock {\em Fund. Math.}, 175(2):175--194, 2002.

\bibitem{Johnstone}
Peter~T. Johnstone.
\newblock {\em Stone spaces}, volume~3 of {\em Cambridge Studies in Advanced
  Mathematics}.
\newblock Cambridge University Press, Cambridge, 1982.

\bibitem{Kato}
Kazuya Kato.
\newblock Existence theorem for higher local fields.
\newblock In {\em Invitation to higher local fields ({M}\"unster, 1999)},
  volume~3 of {\em Geom. Topol. Monogr.}, pages 165--195. Geom. Topol. Publ.,
  Coventry, 2000.

\bibitem{LawsonNaumann}
Tyler Lawson and Niko Naumann.
\newblock Commutativity conditions for truncated {B}rown-{P}eterson spectra of
  height 2.
\newblock {\em J. Topol.}, 5(1):137--168, 2012.

\bibitem{LubinTate}
Jonathan Lubin and John Tate.
\newblock Formal moduli for one-parameter formal {L}ie groups.
\newblock {\em Bull. Soc. Math. France}, 94:49--59, 1966.

\bibitem{Ravenel}
Douglas~C. Ravenel.
\newblock {\em Complex cobordism and stable homotopy groups of spheres}, volume
  121 of {\em Pure and Applied Mathematics}.
\newblock Academic Press Inc., Orlando, FL, 1986.

\bibitem{Rezk}
Charles Rezk.
\newblock Notes on the {H}opkins-{M}iller theorem.
\newblock In {\em Homotopy theory via algebraic geometry and group
  representations ({E}vanston, {IL}, 1997)}, volume 220 of {\em Contemp.
  Math.}, pages 313--366. Amer. Math. Soc., Providence, RI, 1998.

\bibitem{Saito1}
Shuji Saito.
\newblock Arithmetic on two-dimensional local rings.
\newblock {\em Invent. Math.}, 85(2):379--414, 1986.

\bibitem{Saito2}
Shuji Saito.
\newblock Class field theory for two-dimensional local rings.
\newblock In {\em Galois representations and arithmetic algebraic geometry
  ({K}yoto, 1985/{T}okyo, 1986)}, volume~12 of {\em Adv. Stud. Pure Math.},
  pages 343--373. North-Holland, Amsterdam, 1987.

\bibitem{StapletonCharacters}
Nathaniel Stapleton.
\newblock Transchromatic generalized character maps.
\newblock {\em Algebr. Geom. Topol.}, 13(1):171--203, 2013.

\bibitem{StapletonTwist}
Nathaniel Stapleton.
\newblock Transchromatic twisted character maps.
\newblock {\em ArXiv e-prints}, Apr 2013.

\bibitem{Torii1}
Takeshi Torii.
\newblock On degeneration of one-dimensional formal group laws and applications
  to stable homotopy theory.
\newblock {\em Amer. J. Math.}, 125(5):1037--1077, 2003.

\bibitem{Torii2}
Takeshi Torii.
\newblock Milnor operations and the generalized {C}hern character.
\newblock In {\em Proceedings of the {N}ishida {F}est ({K}inosaki 2003)},
  volume~10 of {\em Geom. Topol. Monogr.}, pages 383--421. Geom. Topol. Publ.,
  Coventry, 2007.

\bibitem{Torii4}
Takeshi Torii.
\newblock Comparison of {M}orava {$E$}-theories.
\newblock {\em Math. Z.}, 266(4):933--951, 2010.

\bibitem{Torii3}
Takeshi Torii.
\newblock H{KR} characters, {$p$}-divisible groups and the generalized {C}hern
  character.
\newblock {\em Trans. Amer. Math. Soc.}, 362(11):6159--6181, 2010.

\bibitem{Torii5}
Takeshi Torii.
\newblock {$K(n)$}-localization of the {$K(n+1)$}-local {$E_{n+1}$}-{A}dams
  spectral sequences.
\newblock {\em Pacific J. Math.}, 250(2):439--471, 2011.

\end{thebibliography}

\end{document}